\newcommand{\TheTitle}{A priori error estimates of regularized elliptic problems}
\newcommand{\ShortTitle}{A priori error estimates of regularized elliptic problems}
\begin{document}

\title[\ShortTitle]{{\TheTitle}}

\author{Luca Heltai}
\address{SISSA - International School for Advanced Studies, Via Bonomea 265, 34136 Trieste, Italy}
\email{luca.heltai@sissa.it}
\author{Wenyu Lei}
\address{SISSA - International School for Advanced Studies, Via Bonomea 265, 34136 Trieste, Italy}
\email{wenyu.lei@sissa.it}

\date{Draft version of \today.}
\keywords{Finite elements; interface problems; immersed boundary method; Dirac delta approximations; error estimates.}
\subjclass[2010]{
65N15,                    
65N30.                    
}

\begin{abstract}
  Approximations of the Dirac delta distribution are commonly used to
  create sequences of smooth functions approximating nonsmooth
  (generalized) functions, via convolution. In this work we show
  a-priori rates of convergence of this approximation process in
  standard Sobolev norms, with minimal regularity assumptions on the
  approximation of the Dirac delta distribution. The application of
  these estimates to the numerical solution of elliptic problems with
  singularly supported forcing terms allows us to provide sharp $H^1$
  and $L^2$ error estimates for the corresponding regularized
  problem. As an application, we show how finite element
  approximations of a regularized immersed interface method result in
  the same rates of convergence of its non-regularized counterpart,
  provided that the support of the Dirac delta approximation is set to
  a multiple of the mesh size, at a fraction of the implementation
  complexity. Numerical experiments are provided to support our
  theories.
\end{abstract}

\maketitle



\section{Introduction}\label{s:intro}

Singular source terms are often used in partial differential equations
(PDE) to model interface problems, phase transitions, or
fluid-structure interaction problems. The immersed boundary method
(IBM, ~\cite{Peskin-2002-a}) is a good example of a model problem
where a Dirac delta distribution supported on an immersed fiber or
surface is used to capture complex dynamics that are happening on
possibly moving interfaces. Similar forcing terms can be used, for
example, to model fictitious boundaries in the
domain~\citep{GlowinskiPanPeriaux-1994-a}, or to couple solids and
fluids across non-matching
discretizations~\citep{HeltaiCostanzo-2012-a}.

When the discretization scheme is based on variational principles,
singular sources may be incorporated exactly in the numerical scheme
by the definition of their action on test
functions~\cite{BoffiGastaldi-2003-a, BoffiGastaldiHeltai-2008-a,
  HeltaiCostanzo-2012-a}. However, if the discretization scheme is
based on finite differences~\cite{Peskin-2002-a} or finite
volumes~\cite{MittalIaccarino-2005-a}, this procedure is cumbersome,
and a typical alternative is to regularize the source term by taking
its convolution with an approximation of the Dirac delta
distribution, or by modifying the differential operators themselves to
incorporate the knowledge about the interface~\cite{LevequeLi-1994-a}.

Several works are dedicated to the design of good Dirac
approximations~\cite{MR3429589,TornbergEngquist-2004-a} to use in this
regularization process, and their convergence properties are well
known in the literature of immersed boundary
methods~\cite{LaiPeskin-2000-a,GriffithPeskin-2005-a}.

When the source term is a single Dirac distribution, \cite{MR3429589}
proved convergence in both the weak-$\ast$ topology of distributions,
as well as in a weighted Sobolev norm, provided that certain momentum
conditions are satisfied.

In many applications, however, the Dirac distribution is not used
directly as a source term, but only in formal convolutions to
represent the coupling between overlapping domains. In this context,
the resulting source term may or may not be singular, according to the
co-dimension of the immersed domain itself.  To fix the ideas,
consider a source term of the kind
\begin{equation*}
  F(x) := \int_B \delta(x-y) f(y) \diff y, \qquad x \in \Omega
  \subseteq \Rd, \qquad B \subset \Omega.
\end{equation*}
The above formalism is used in immersed methods to represent a
(possibly singular) coupling between $B$ and $\Omega$. Here $\delta$
is a $d$-dimensional Dirac distribution. If the co-dimension of the
immersed domain $B$ is zero, then the above forcing term reduces to
\begin{equation*}
  F(x) = \chi_B(x)f(x),
\end{equation*}
where $\chi_B$ is the indicator function of $B$, owing to the
distributional definition of $\delta$. However, if the co-dimension of
$B$ is greater than zero, the integration over $B$ does not exhaust
the singularity of the Dirac distribution: the resulting $F$ is still
singular, and it should be interpreted as the distribution whose
effect on smooth functions $\varphi$ is given by:
\begin{equation*}
  \langle F, \varphi\rangle := \int_B f(y) \varphi(y) \diff y, \qquad \forall \varphi \in C^\infty_c(\overline\Omega).
\end{equation*}
In the three dimensional case, assuming that $f\in L^2(B)$, the regularity of $F$ goes from being 
$L^2(\Omega)$ when the co-dimension of $B$ is zero, to a negative 
Sobolev space which cannot be smoother than $H^{-1/2}(\Omega)$,
$H^{-1}(\Omega)$, and $H^{-3/2}(\Omega)$ when $B$ is a Lipschitz surface,
Lipschitz curve, or point, respectively.

In all cases, a regularization of $F$ is possible by convolution with
an approximate (possibly smooth) Dirac function. In most of the
literature that exploits this technique from the numerical point of
view, pointwise convergence, truncation, and Taylor expansions are
used to argue that high order convergence can be achieved in $L^p(\Omega)$
norms for the numerical approximation of the regularized problem,
provided that some specific conditions are met in the construction of
the approximate Dirac
function~\cite{Mori-2008-a,LiuMori-2012-a,LiuMori-2014-a}. Convergence
of the regularized solution to the exact solution of the original
problem is usually left aside, with the notable exceptions of the work
by~\cite{MR3429589}, where only the degenerate case of $B$ being a
single point is considered, and the work
by~\cite{SaitoSugitani-2019-a}, where suboptimal estimates in
$W^{-1, p}(\Omega)$ norm for some $p\in[1,\tfrac d{d-1})$ are provided 
for finite element discretizations of
the immersed boundary method applied to the Stokes problem.

Even though the mollification (also known as regularization) technique
is widely used in the mathematical analysis community, very little is
known about the the speed at which regularized functions converge to
their non-regularized counterparts in standard Sobolev norms, when
non-smooth approximations of the Dirac delta distribution are considered.

In this work we provide convergence results in standard Sobolev norms
that mimic closely the a-priori estimates available in finite element
analysis, and we investigate the performance of the finite element
method using regularized forcing data, in the energy and
$L^2(\Omega)$ norms, for a class of singular source terms commonly
used in interface problems, fluid structure interaction problems, and
fictitious boundary methods.

We start by providing an {a priori} framework in standard Sobolev spaces
for unbounded domains, to study the convergence of regularized
functions to their non-regularized counterparts, with minimum
requirements on the regularization kernel.

We extend this framework to bounded domains, assuming that the support
of the forcing data is away from the physical domain (see the
definition in \eqref{i:interface-assumption}), and we study
the convergence speed of elliptic problems with regularized forcing
terms to their non-regularized counterparts, with minimum requirements
on the support of the forcing term. For compactly supported kernels,
in Theorem~\ref{t:h1-reg}, we provide sharp convergence estimates in
the energy norm in terms of powers of the radius of the support. We
also provide an $L^2(\Omega)$ error estimate by following a duality
argument (or Aubin-Nitsche trick) and using the $H^2$ interior
regularity of a dual problem; we refer to Theorem~\ref{t:reg-estimate}
for more details. We note that although we only consider Dirichlet
boundary conditions in this paper, the convergence results that we
derive for the regularized problem can be also applied to elliptic
problems with other boundary conditions; see Remark~\ref{r:bc}.

As an application, we investigate how the regularization affects the
total error of the finite element approximation of an interface
problem via immersed methods, and we show that all the estimates we
obtain are sharp. Even though a regularization in this case is not
necessary when using the finite element
method~\cite{BoffiGastaldi-2003-a}, the biggest advantage of using
regularization comes from the fact that its numerical implementation
is trivial; see
Remark~\ref{r:compute}. Theorem~\ref{t:discrete-h1-estimate} and
\ref{t:discrete-l2-estimate} show that the regularization does not
affect the overall performance of the finite element approximation in
both energy and $L^2(\Omega)$ norms when choosing the regularization
parameter to be a multiple of the maximal size of the quasi-uniform
subdivision of $\Omega$.

The rest of the paper is organized as follows. We first introduce some
notations in Section~\ref{s:prelim}, and provide general results in
both bounded and unbouned domains in
Section~\ref{s:regularization}. These results are applied to a general
model elliptic problem in Section~\ref{ss:model-problem}, where we use
the results of Section~\ref{s:regularization} to derive sharp error
estimates in energy and $L^2(\Omega)$ norms. An application to
immersed methods is provided in Section~\ref{s:ib}, where we apply the
theory to a finite element approximation of an interface problem.
Finally we discuss the numerical implementation of the singular
forcing data (with and without regularization) and validate our
findings via numerical simulations in Section~\ref{s:numerical}.

\section{Notations and preliminaries}\label{s:prelim}
In what follows, we set $\Omega$ to be a bounded Lipschitz domain in $\Rd$
with $d=2$ or $3$. We write $A\preceq B$ when $A\le cB$ for some constant $c$
independent of regularization and discretization parameters (mentioned in
Section~\ref{s:intro}) in $A$ and $B$. We indicate $A\sim B$ when both
$A\preceq B$, and $B\preceq A$. For $a\in\Real$, we denote with $a^-$
(or $a^+$) any real number strictly smaller (or greater) than $a$.

For $x\in \Rd$, we use $|x|$ to indicate the euclidean norm of $x$,
and given a normed space $X$, we denote by $X'$ and
$\langle\cdot,\cdot\rangle_{X',X}$ its dual space and the duality
pairing, respectively. We also denote by $\|\cdot\|_X$ and
$\|\cdot\|_{X'}$ the norm of $X$ and the operator norm of $X'$, \ie
\[
	\|v\|_{X'}:=\sup_{v\in X, \|v\|_X\neq 0}\frac{\langle v,w\rangle_{X,X'}}{\|w\|_{X}} .
\]

\subsection{Sobolev spaces}

For $s\in\mathbb N$ and $p>1$, we denote by $W^{s,p}(\Omega)$,
$H^s(\Omega)$ and $L^2(\Omega)$ the usual Sobolev spaces. For
convention we set $H^0(\Omega)=L^2(\Omega)$ and denote with
$(\cdot,\cdot)_\Omega$ the $L^2(\Omega)$ inner product. 
{We denote with $|\cdot|_{H^s(\Omega)}$ the usual semi-norm
of $H^s(\Omega)$, and we set $|\cdot|_{H^0(\Omega)}:=\|\cdot\|_{L^2(\Omega)}$.}
Let $\Hunz$ be
the set of functions in $H^1(\Omega)$ vanishing on
$\partial\Omega$. It is well known that $\Hunz$ is the closure of the
space of infinitely differentiable functions with compact support in
$\overline\Omega$ (denoted by $C^\infty_c(\overline\Omega)$ or
$\mathcal D(\Omega)$) with respect to the norm of $H^1(\Omega)$. For
$s\in(0,1)$, $H^s(\Omega)$ denotes the space of functions whose
Sobolev-Slobodeckij norm
\[
	\|v\|_{H^s(\Omega)} :=\Big( \|v\|_{L^2(\Omega)}^2 + \int_\Omega\int_\Omega \frac{(v(x)-v(y))^2}{|x-y|^{d+2s}}\diff x\diff y\Big)^{1/2}
\]
is finite. Similarly, for $s\in(1,2)$, the norm of $H^s(\Omega)$ is 
\[
	\|v\|_{H^s(\Omega)}=\big(\|v\|_{L^2(\Omega)}^2+\|\GRAD v\|_{H^{s-1}(\Omega)}^2\big)^{1/2} .
\]
It is well known that for $s\in(0,2)$,
$H^s(\Omega)=[L^2(\Omega),H^2(\Omega)]_s$, where $[X,Y]_s$ denotes the
interpolation space between $X$ and $Y$ using the real method. For
$s\in[0,2]$, we denote $H^{-s}(\Omega)=H^s(\Omega)'$.

\subsection{Distributions}
We indicate with $\mathcal D(\Rd)$ or $\mathcal D(\Omega)$ the spaces
of smooth functions with compact support in $\Rd$ or in $\Omega$, i.e.,
$\mathcal D(\Rd) := C^{\infty}_c(\Rd)$ and
$\mathcal D(\Omega)=C^{\infty}_c(\overline{\Omega})$, and with $\mathcal D'(\Rd)$
and $\mathcal D'(\Omega)$ their dual spaces (also known as the spaces
of distributions). The $d$-dimensional Dirac delta distribution
centered in $x$ is
defined as the linear functional $\delta_x$ in $\mathcal D'$ such that
\begin{equation*}
  \langle\delta_x, v\rangle_{\mathcal D'(A), \mathcal D(A)} =\int_A
  \delta(x-y) v(y) \diff y := v(x) \quad \forall v
  \in \mathcal D(A), \quad \forall x\in A,
\end{equation*}
where $A$ is either $\Rd$ or $\Omega$.

\section{Regularization}\label{s:regularization}

In the mathematical literature, mollifiers were introduced by
Sobolev~\cite{soboleff1938theoreme}, and formalized by
Friedrichs~\cite{Friedrichs1944}, to provide an effective way of
regularizing non smooth or singular functions by performing the
convolution with a smooth and compactly supported function that
integrates to one (the kernel, or mollifier).

The outcome of the procedure is a function that has the same
regularity property of the kernel, normally taken to be $C^\infty$.
In this section we relax the requirements of the kernel, and we
provide some basic results that can be obtained by regularizing with
approximated Dirac distributions that are not necessarily
$C^\infty$. In particular, we consider functions $\psi$ that satisfy
the following assumptions:
\begin{assumption}\label{a:app-dirac}
  Given $k \in \mathbb{N}$, let $\psi(x)$ in $L^\infty(\Rd)$ be
  such that
\begin{enumerate}[1.]
\item \textbf{Compact support:} \\
  $\psi(x)$ is compactly supported, with support $\emph{supp}(\psi)$
  contained in $B_{r_0}$ (the ball centered in zero with radius $r_0$)
  for some $r_0>0$;
\item \textbf{$k$-th order moments condition}:\\
  \begin{equation}
    \int_\Rd y^\alpha_i\psi(x-y)\diff y = x^\alpha_i \qquad
    i=1\ldots d, \quad 0\leq\alpha\leq k, \quad\forall x \in \Rd;
\end{equation}
\end{enumerate}
\end{assumption}

\begin{lemma}[Convergence to $\delta$] A function $\psi$ that
  satisfies Assumption~\ref{a:app-dirac} for some $k\geq 0$,
  defines a one-parameter family of Dirac delta approximations, i.e.,
  define
  \begin{equation}
    \label{e:dirac-approximation}
    \depsilon := \frac{1}{\varepsilon^d}
    \psi\left(\frac{x}{\varepsilon}\right)
  \end{equation}
  then
  \begin{equation*}
    \lim_{\varepsilon \to 0} \depsilon(x) = \lim_{\varepsilon \to 0}
    \frac{1}{\varepsilon^d} \psi\left(\frac{x}{\varepsilon}\right) = \delta(x),
  \end{equation*}
  where $\delta (x)$ is the Dirac delta distribution and the limit
  must be understood in the space of Schwartz distributions.
\end{lemma}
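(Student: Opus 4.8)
The plan is to show that $\langle \depsilon, \varphi\rangle \to \varphi(0)$ as $\varepsilon \to 0$ for every test function $\varphi$, which is exactly the statement of convergence in the sense of distributions. First I would record the two elementary consequences of Assumption~\ref{a:app-dirac} that drive the argument. The moments condition with $\alpha = 0$ (taking $x = 0$) gives $\int_\Rd \psi(-y)\,\diff y = 0^0 = 1$, i.e. $\psi$ integrates to one; a change of variables then shows $\int_\Rd \depsilon(x)\,\diff x = 1$ for every $\varepsilon > 0$. Second, the rescaling $\depsilon(x) = \varepsilon^{-d}\psi(x/\varepsilon)$ has support contained in $B_{\varepsilon r_0}$, which shrinks to the origin as $\varepsilon \to 0$.

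Next I would estimate the difference directly. Writing $\varphi(0) = \varphi(0)\int_\Rd \depsilon(x)\,\diff x$, we have
\begin{equation*}
  \langle \depsilon, \varphi\rangle - \varphi(0) = \int_{B_{\varepsilon r_0}} \depsilon(x)\bigl(\varphi(x) - \varphi(0)\bigr)\,\diff x.
\end{equation*}
On the shrinking ball $B_{\varepsilon r_0}$ we bound $|\varphi(x) - \varphi(0)| \le \|\GRAD\varphi\|_{L^\infty}\,|x| \le \varepsilon r_0 \|\GRAD\varphi\|_{L^\infty}$ (continuity of $\varphi$ alone would also suffice here, via uniform continuity on a compact neighbourhood of the origin). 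The $L^1$ norm of $\depsilon$ is bounded independently of $\varepsilon$: by the change of variables $x = \varepsilon y$ one gets $\|\depsilon\|_{L^1(\Rd)} = \|\psi\|_{L^1(\Rd)} \le |B_{r_0}|\,\|\psi\|_{L^\infty(\Rd)}$, which is finite by the compact support and $L^\infty$ assumptions. Combining these,
\begin{equation*}
  \bigl|\langle \depsilon, \varphi\rangle - \varphi(0)\bigr| \le \varepsilon\, r_0 \,\|\GRAD\varphi\|_{L^\infty(\Rd)}\,\|\psi\|_{L^1(\Rd)} \xrightarrow[\varepsilon \to 0]{} 0,
\end{equation*}
which is the claim. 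The same computation works verbatim for $\delta_x$ centered at an arbitrary point $x$ by translating, or by invoking the full moments condition, so the one-parameter family converges to $\delta$ on all of $\mathcal D$.

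I do not expect a genuine obstacle here; the statement is the classical mollifier convergence lemma, and the only point requiring the relaxed hypotheses of Assumption~\ref{a:app-dirac} is making sure that "integrates to one" is extracted from the zeroth moment condition rather than assumed separately, and that $\|\psi\|_{L^1}$ is finite under the stated $L^\infty$-plus-compact-support regularity rather than the usual $C^\infty_c$ regularity. One minor care point is that $\depsilon$ is an $L^1$ function, hence a regular distribution, so the pairing $\langle\depsilon,\varphi\rangle$ is just the integral $\int_\Rd \depsilon\varphi\,\diff x$ and no distributional subtleties arise; the limit is then an honest limit of numbers for each fixed $\varphi$, which is precisely convergence in $\mathcal D'$.
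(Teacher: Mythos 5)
Your proof is correct. Note that the paper states this lemma without proof (it is treated as the classical mollifier fact, with only the remark afterwards that the $k=0$ moment condition gives unit integral), so there is nothing in the text to compare against; your argument --- unit mass extracted from the $\alpha=0$ moment condition, $\|\psi\|_{L^1(\Rd)}<\infty$ from compact support plus the $L^\infty$ bound, shrinking support $B_{\varepsilon r_0}$, and the Lipschitz bound on the test function --- is exactly the standard way to supply the missing proof, and it works for test functions in either $\mathcal D(\Rd)$ or the Schwartz class since only $\|\GRAD\varphi\|_{L^\infty}$ is used. One cosmetic point: rather than specializing to $x=0$ (which produces the awkward ``$0^0$''), read the $\alpha=0$ case of Assumption~\ref{a:app-dirac}.2 as $\int_{\Rd}\psi(x-y)\,\diff y=1$ for every $x$, with the usual convention that the zeroth power is identically one.
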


Notice that Assumption~\ref{a:app-dirac}.2 with $k=0$ implies that
both $\psi$ and $\depsilon$ have unit integral. Moreover, we make no
additional assumptions on the global regularity of $\psi$, except from
requiring it to be in $L^\infty(\Rd)$. 

Any function $\psi$ that satisfies Assuption~\ref{a:app-dirac} defines a
one-parameter family of Dirac delta approximations $\depsilon$,
through which it is possible to define a regularization in both $\Rd$
and $\Omega$:

\begin{definition}[Regularization]
  \label{d:regularization}
  For a function $v\in L^1(A)$\, we define its regularization
  $\vepsilon(x)$ in the domain $A$ (either $\Omega$ or $\Rd$) {through}
  the mollifier $\psi$ by
\begin{equation}\label{e:regularization-A}
  \vepsilon(x) := \int_{A} \depsilon(x-y) v(y) \diff y,\qquad\forall x\in A,
\end{equation}
where $\depsilon$ is defined as in
Eq.~\eqref{e:dirac-approximation}, i.e.,
\begin{equation*}
  \depsilon := \frac{1}{\varepsilon^d} \psi\left(\frac{x}{\varepsilon}\right),
\end{equation*}
and $\psi$ satisfies Assumption~\ref{a:app-dirac} for some $k\geq 0$.

For functionals $F$ in negative Sobolev spaces, say $F\in H^{-s}(A)$,
with $s\geq 0$, we define its regularization $F^\varepsilon$ by the
action of $F$ on $v^\varepsilon$ with $v\in H^s(A)$, \ie
\begin{equation*}
  \langle F^\varepsilon, v \rangle_{H^{-s}(A), H^s(A)} :=   \langle
  F, v^\varepsilon \rangle_{H^{-s}(A), H^s(A)} .
\end{equation*}
\end{definition}

\begin{lemma}[$L^1$ growth control]
  \label{l:L1-control}
  A Dirac approximation $\depsilon$ constructed from a function $\psi$
  that satisfies Assumptions~\ref{a:app-dirac} (irrespective of
  $k\geq 0$), also satisfies the following polynomial growth
  condition:
  \begin{equation}
    \label{e:L1-control}
    \||x|^m\depsilon(x)\|_{L^1(\Rd)} 
    \preceq \varepsilon^m,  \qquad 0\leq m \in \Real
  \end{equation}
  where the hidden constant depends on $m$, $d$ and the choice of $\psi$.
\end{lemma}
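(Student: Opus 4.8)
The plan is to reduce the claim to a statement about the fixed function $\psi$ by a change of variables, and then to use the compact support of $\psi$ together with $\psi\in L^\infty(\Rd)$. Writing out the $L^1$ norm,
\[
\||x|^m\depsilon(x)\|_{L^1(\Rd)} = \int_{\Rd} |x|^m \frac{1}{\varepsilon^d}\Bigl|\psi\Bigl(\frac{x}{\varepsilon}\Bigr)\Bigr| \diff x,
\]
I would substitute $z = x/\varepsilon$, so $x = \varepsilon z$ and $\diff x = \varepsilon^d \diff z$. The factors of $\varepsilon^d$ cancel and $|x|^m = \varepsilon^m |z|^m$, leaving
\[
\||x|^m\depsilon(x)\|_{L^1(\Rd)} = \varepsilon^m \int_{\Rd} |z|^m |\psi(z)| \diff z.
\]

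It then remains to show that the remaining integral is a finite constant depending only on $m$, $d$, and $\psi$. This is where Assumption~\ref{a:app-dirac}.1 enters: since $\operatorname{supp}(\psi)\subseteq B_{r_0}$, the integrand vanishes outside $B_{r_0}$, so
\[
\int_{\Rd} |z|^m |\psi(z)| \diff z = \int_{B_{r_0}} |z|^m |\psi(z)| \diff z \le r_0^m \,\|\psi\|_{L^\infty(\Rd)}\, |B_{r_0}|,
\]
which is finite because $\psi\in L^\infty(\Rd)$ and $B_{r_0}$ has finite Lebesgue measure; here $|B_{r_0}| = r_0^d \omega_d$ with $\omega_d$ the volume of the unit ball. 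Denoting this constant by $c = c(m,d,\psi)$, we obtain $\||x|^m\depsilon(x)\|_{L^1(\Rd)} = c\,\varepsilon^m \preceq \varepsilon^m$, which is exactly \eqref{e:L1-control}.

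There is no real obstacle here: the only things being used are the scaling definition \eqref{e:dirac-approximation} of $\depsilon$, the compact support from Assumption~\ref{a:app-dirac}.1, and the $L^\infty$ bound on $\psi$; note that the moment conditions in Assumption~\ref{a:app-dirac}.2 play no role, consistent with the statement being valid "irrespective of $k\geq 0$." The only point worth stating carefully is that $m$ is allowed to be an arbitrary nonnegative real, so one should phrase the bound $|z|^m\le r_0^m$ on $B_{r_0}$ for real exponents (trivial, since $|z|\le r_0$), rather than invoking any polynomial structure.
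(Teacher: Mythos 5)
Your proof is correct and follows essentially the same route as the paper: the change of variables $z=x/\varepsilon$ extracts the factor $\varepsilon^m$, and the compact support together with $\|\psi\|_{L^\infty(\Rd)}$ bounds the remaining integral by a constant depending only on $m$, $d$, and $\psi$. The only cosmetic difference is that the paper keeps the constant as $\|\psi\|_{L^\infty(\Rd)}\,\|\,|\xi|^m\,\|_{L^1(B_{r_0})}$ while you bound it further by $r_0^m\|\psi\|_{L^\infty(\Rd)}|B_{r_0}|$, which is immaterial.
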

\begin{proof}
  By considering the change of variable $x = \xi \varepsilon$, we
  observe that
  \begin{equation*}
    \begin{split}
   \||x|^m\depsilon(x)\|_{L^1(\Rd)} & = \int_{B_{\varepsilon r_0}}
   ||x|^m\depsilon(x)|\diff x \\
   & = \int_{B_{\varepsilon r_0}} \left||x|^m\frac{1}{\varepsilon^d}
     \psi\left(\frac{x}{\varepsilon}\right)\right|\diff x\\
   & = \int_{B_{r_0}} \left||\varepsilon\xi|^m\psi(\xi)\right|\diff \xi\\
   & \leq \| \psi \|_{L^\infty(\Rd)} ~~\|~|\xi|^m~\|_{L^1(B_{r_0})}~~\varepsilon^m.
 \end{split}
\end{equation*}
\end{proof}

We shall consider two common methods for choosing $\psi(x)$. In the
first case we choose a one dimensional function
$\psi_\rho\in L^\infty(\Real)$ so that $\psi_\rho$ is supported in
$[0,1]$, and we define the function $\psi$ as
\begin{equation}
  \label{e:delta-app-1}
  \psi(x) := I_d\psi_\rho(|x|), 
\end{equation}
where $I_d$ is a scaling factor, chosen so that $\psi(x)$ integrates
to one. With this choice, $\psi$ satisfies
Assumption~\ref{a:app-dirac} with $k$ at least equal to one (i.e., it
satisfies the first moments conditions).

The second construction method is usually referred to as tensor
product construction. We start from a $L^\infty(\Real)$ function
$\psi_{1d}$ that satisfies Assumption~\ref{a:app-dirac} for some $k$
in dimension one. Then we define $\psi(x)$ in dimension $d$ by
\begin{equation}\label{e:delta-app-2}
	\psi(x) := \prod_{i=1}^d \psi_{1d}(x_i), 
	\quad \text{ for } x = (x_1,\cdots, x_d)\in\Rd .
\end{equation}
The tensor product approximation $\psi(x)$ satisfies
Assumption~\ref{a:app-dirac} with {$r_0=\sqrt{d}$} and the same $k$ of $\psi_{1d}$. In
particular $k=0$ if $\psi_{1d}$ is not symmetric, and $k$ is equal to
at least one if $\psi_{1d}$ is symmetric.  We refer to \cite[Section
3]{MR3429589} for an in depth discussion on other possible choices of
Dirac approximation classes $\psi(x)$ with possibly higher order
moment conditions.

\subsection{Unbounded domains}
We begin by providing some results that follow from an application of
Young's inequality for convolutions:
\begin{lemma}[Young's inequality for convolutions~\cite{Young1912}]
  \label{l:young}
  Given $f,g\in L^2(\Rd)$ and $h\in L^1(\Rd)$,
  \begin{equation}
    \bigg|\int_\Rd\int_\Rd f(x)g(y)h(x-y)\diff x\diff y\bigg| \le
    \|f\|_{L^2(\Rd)}\|g\|_{L^2(\Rd)}\|h\|_{L^1(\Rd)},
    \label{e:youngs-inequality}
  \end{equation}
\end{lemma}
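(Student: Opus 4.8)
The plan is to reduce the claim to a single application of the Cauchy--Schwarz inequality in $L^2(\Rd\times\Rd)$, after a symmetric splitting of the kernel. First I would write $|h(x-y)| = |h(x-y)|^{1/2}\,|h(x-y)|^{1/2}$, so that the integrand factors as
\[
  f(x)g(y)h(x-y) = \bigl(f(x)\,|h(x-y)|^{1/2}\bigr)\,\bigl(g(y)\,|h(x-y)|^{1/2}\bigr)\,\mathrm{sgn}\bigl(h(x-y)\bigr),
\]
and Cauchy--Schwarz on the product measure space gives
\[
  \bigg|\int_\Rd\int_\Rd f(x)g(y)h(x-y)\diff x\diff y\bigg|
  \le \Bigl(\int_\Rd\int_\Rd |f(x)|^2|h(x-y)|\diff x\diff y\Bigr)^{1/2}
       \Bigl(\int_\Rd\int_\Rd |g(y)|^2|h(x-y)|\diff x\diff y\Bigr)^{1/2}.
\]

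Next I would evaluate each of the two factors by Tonelli's theorem, which applies because the integrands are non-negative and measurable. For the first one, integrating in $y$ first and using the translation invariance of Lebesgue measure,
\[
  \int_\Rd\int_\Rd |f(x)|^2|h(x-y)|\diff y\diff x
  = \int_\Rd |f(x)|^2\Bigl(\int_\Rd |h(x-y)|\diff y\Bigr)\diff x
  = \|h\|_{L^1(\Rd)}\,\|f\|_{L^2(\Rd)}^2,
\]
and symmetrically the second factor equals $\|h\|_{L^1(\Rd)}\,\|g\|_{L^2(\Rd)}^2$. Substituting these into the Cauchy--Schwarz bound yields exactly \eqref{e:youngs-inequality}.

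The only point requiring care --- and the one I would treat as the main (mild) obstacle --- is the measurability and integrability bookkeeping needed to justify these manipulations: one must note that $(x,y)\mapsto h(x-y)$ is measurable on $\Rd\times\Rd$, being the composition of the measurable map $h$ with the continuous map $(x,y)\mapsto x-y$, and that the original double integral is absolutely convergent so that Fubini's theorem (not merely Tonelli's) applies. The latter is not circular: running the computation above with $|f|$, $|g|$, $|h|$ in place of $f$, $g$, $h$ shows a priori that $\int_\Rd\int_\Rd |f(x)g(y)h(x-y)|\diff x\diff y \le \|f\|_{L^2(\Rd)}\|g\|_{L^2(\Rd)}\|h\|_{L^1(\Rd)} < \infty$, after which all interchanges of integration order are legitimate and the signed estimate follows.
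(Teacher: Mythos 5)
Your proof is correct. Note, however, that the paper does not prove this lemma at all: it is stated as a classical result and attributed to Young's 1912 paper, so there is no in-paper argument to compare against. Your route --- splitting $|h(x-y)|=|h(x-y)|^{1/2}|h(x-y)|^{1/2}$, applying Cauchy--Schwarz on the product space $\mathbb{R}^d\times\mathbb{R}^d$, and evaluating the two factors by Tonelli and translation invariance --- is the standard textbook proof of this $L^2$--$L^2$--$L^1$ trilinear form of Young's inequality, and your measurability and absolute-convergence bookkeeping (measurability of $(x,y)\mapsto h(x-y)$, a priori finiteness of the integral with $|f|,|g|,|h|$ so that the signed estimate is legitimate) is exactly what is needed to make it rigorous.
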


\begin{lemma}\label{l:reg-Rd}
  For $0\le s \le k+1$, let $v\in H^s(\Rd)$, and let $\vepsilon$ be
  defined by Definition~\ref{d:regularization}. Then there holds
\begin{equation}\label{i:reg-l2-esti-Rd}
	\|v-\vepsilon\|_{L^2(\Rd)} \preceq
        \varepsilon^{s}{\|v\|_{H^s(\Rd)}}, \qquad 0\le s \le k+1.
\end{equation}
\end{lemma}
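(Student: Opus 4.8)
The plan is to establish the estimate at the two endpoints $s=0$ and $s=k+1$, and then recover all intermediate values by interpolation. The case $s=0$ is immediate: writing $\vepsilon=\depsilon*v$ and using Young's inequality for convolutions (Lemma~\ref{l:young}) together with the observation that $\|\depsilon\|_{L^1(\Rd)}=\|\psi\|_{L^1(\Rd)}\preceq\|\psi\|_{L^\infty(\Rd)}\,|B_{r_0}|$ is bounded independently of $\varepsilon$, we get $\|\vepsilon\|_{L^2(\Rd)}\preceq\|v\|_{L^2(\Rd)}$, hence by the triangle inequality $\|v-\vepsilon\|_{L^2(\Rd)}\preceq\|v\|_{L^2(\Rd)}=\varepsilon^0\,\|v\|_{H^0(\Rd)}$.

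For $s=k+1$, by density of $C^\infty_c(\Rd)$ in $H^{k+1}(\Rd)$ and the uniform $L^2$-boundedness of $v\mapsto v-\vepsilon$ just obtained, it is enough to prove the bound for smooth, compactly supported $v$. For such $v$, Assumption~\ref{a:app-dirac}.2 with $\alpha=0$ gives $\int_\Rd\depsilon(x-y)\diff y=1$, so
\[
  \vepsilon(x)-v(x)=\int_\Rd\depsilon(x-y)\,\bigl(v(y)-v(x)\bigr)\diff y .
\]
Substituting the Taylor expansion of $v(y)$ about $x$ with integral remainder of order $k+1$, the polynomial terms of degree $1,\dots,k$ have the form $\tfrac1{\alpha!}\partial^\alpha v(x)(y-x)^\alpha$, and after the change of variables $z=x-y$ the moment conditions of Assumption~\ref{a:app-dirac} give $\int_\Rd\depsilon(x-y)(y-x)^\alpha\diff y=\varepsilon^{|\alpha|}\int_\Rd u^\alpha\psi(u)\diff u=0$ for $1\le|\alpha|\le k$, so these terms disappear. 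What remains is
\[
  \vepsilon(x)-v(x)=(k+1)\sum_{|\alpha|=k+1}\frac1{\alpha!}\int_\Rd\depsilon(z)(-z)^\alpha\!\int_0^1(1-t)^k\,\partial^\alpha v(x-tz)\diff t\diff z .
\]
Taking $\|\cdot\|_{L^2(\Rd)}$ in $x$, pulling the finite sum and the $t$-integral outside by Minkowski's integral inequality, and using $\|\partial^\alpha v(\cdot-tz)\|_{L^2(\Rd)}=\|\partial^\alpha v\|_{L^2(\Rd)}$, each summand is controlled by $\|\,|z|^{k+1}\depsilon(z)\|_{L^1(\Rd)}\,\|\partial^\alpha v\|_{L^2(\Rd)}$ (the factor $k+1$ being absorbed by $\int_0^1(1-t)^k\diff t=\tfrac1{k+1}$). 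Lemma~\ref{l:L1-control} with $m=k+1$ then yields $\|\vepsilon-v\|_{L^2(\Rd)}\preceq\varepsilon^{k+1}\sum_{|\alpha|=k+1}\|\partial^\alpha v\|_{L^2(\Rd)}\preceq\varepsilon^{k+1}\|v\|_{H^{k+1}(\Rd)}$, and the general case follows by density.

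Finally, for $0<s<k+1$ I would view $T_\varepsilon:v\mapsto v-\vepsilon$ as a linear map which, by the two endpoint estimates, is bounded $L^2(\Rd)\to L^2(\Rd)$ with norm $\preceq1$ and $H^{k+1}(\Rd)\to L^2(\Rd)$ with norm $\preceq\varepsilon^{k+1}$; since $H^s(\Rd)=[L^2(\Rd),H^{k+1}(\Rd)]_{s/(k+1)}$, real interpolation gives $\|T_\varepsilon\|_{H^s(\Rd)\to L^2(\Rd)}\preceq(\varepsilon^{k+1})^{s/(k+1)}=\varepsilon^{s}$, which is exactly \eqref{i:reg-l2-esti-Rd}. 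The delicate point is the case $s=k+1$: one must check that the moment conditions really annihilate the full degree-$\le k$ Taylor polynomial (for the radial kernel \eqref{e:delta-app-1} this happens only when $k=1$, whereas for the tensor-product kernel \eqref{e:delta-app-2} all mixed moments vanish), and one must organize the integral remainder so that it collapses — through Minkowski's inequality and translation invariance — to the single scalar quantity $\|\,|z|^{k+1}\depsilon(z)\|_{L^1(\Rd)}$ estimated in Lemma~\ref{l:L1-control}, the density step being what licenses the use of the classical Taylor formula for merely $H^{k+1}$ data. A Fourier-side variant, bounding $|1-\widehat\psi(\varepsilon\xi)|\preceq\min\{1,(\varepsilon|\xi|)^{k+1}\}\preceq(\varepsilon|\xi|)^s$ via the vanishing of the derivatives of the entire function $\widehat\psi$ at the origin, is also possible but meets the same moment bookkeeping.
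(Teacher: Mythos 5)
Your proposal is correct and follows essentially the same route as the paper's proof: Taylor expansion about $x$ with the moment conditions of Assumption~\ref{a:app-dirac}.2 annihilating the polynomial part, the remainder controlled through the scaled-kernel bound of Lemma~\ref{l:L1-control}, an $L^2$-stability estimate at $s=0$ via Young's inequality, and then interpolation plus density. The only cosmetic differences are that you prove just the endpoint $s=k+1$ (the paper treats every integer $1\le s\le k+1$ before interpolating) and you use Minkowski's integral inequality with translation invariance in place of the paper's duality pairing with $\theta\in L^2(\Rd)$ and Young's convolution inequality, which are equivalent ways of packaging the same estimate.
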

\begin{proof}
  \boxed{1} We start by considering $v\in C^\infty_c(\Rd)$, and the
  case where $s$ is integer, and $1 \leq s \leq k+1$. By Taylor expansion, it is
  possible to expand $v(y)$ around an arbitrary point $x$ in a
  polynomial part $p_x$ and a residual part $r_x$, i.e.:
  \begin{equation*}
    \begin{split}
       v( y ) &= p_x(y) + r_x(y)\\
      & =\underbrace{\sum_{|\alpha|\leq s-1} \frac{D^\alpha v(x)}{\alpha!}
      (y-x)^\alpha}_{p_x \in P^{s-1}(\Rd)} + \underbrace{\sum_{|\beta|=s} R_\beta(y)(y-x)^\beta}_{r_x}, \\
      R_\beta( y ) & := \frac{|\beta|}{\beta!} \int_0^1
      (1-t)^{|\beta|-1}D^\beta v \big(x+t( y-x )\big) \diff t.
    \end{split}
  \end{equation*}
  Here $\alpha$ and $\beta$ are multi-indices. If we regularize $v$, since the regularization is a linear operator, we obtain:
  \begin{equation*}
    v^\varepsilon(y) = p_x^\varepsilon(y) + r_x^\varepsilon(y) =   p_x(y) + r_x^\varepsilon(y),
  \end{equation*}
 where in the last equality follows from Assumption~\ref{a:app-dirac}.2 so that $p_x^\varepsilon = p_x$ for any polynomial
  of order up to $k$.
  
  Given any $\theta\in L^2(\Rd)$, we deduce
  \[
    \begin{aligned}
      (v-\vepsilon, \theta)_{\Rd} & = \int_{\Rd}\left(v(x)-\int_{\Rd}
        \depsilon(x-y) v(y) \diff y\right)\theta(x) \diff x \\
      & = \int_{\Rd}\left(\underbrace{r_x(x)}_{=0}-\int_{\Rd}
        \depsilon(x-y) r_x(y) \diff y\right)\theta(x) \diff x,\\
      & = -\int_{\Rd}\int_\Rd \depsilon(x-y) r_x(y) \theta(x) \diff y.
      \diff x
    \end{aligned}
  \]
  Applying the definition of $r_x(y)$, Fubini's theorem, and by the change of variable
  $\xi =x+ t(y-x)$ for a fixed $x\in \Rd$, and $t\in (0,1)$, we have:
    \begin{equation*}
      \begin{split}
        (v-\vepsilon, \theta)_{\Rd}  =&  -\int_0^1 \int_{\Rd}\int_\Rd
    \sum_{|\beta|=s} (1-t)^{|\beta|-1} \frac{|\beta|}{\beta!}
        \\
        &D^\beta v \big(x+t( y-x )\big) (y-x)^\beta \depsilon(x-y)
        \theta(x) \diff y \diff x \diff t\\
        = &-\int_0^1 \int_{\Rd}\int_\Rd
        \sum_{|\beta|=s} (1-t)^{|\beta|-1} \frac{|\beta|}{\beta!}
        \\
        &D^\beta v (\xi) \left(\frac{\xi-x}{t}\right)^\beta \depsilon\left(-\frac{\xi-x}{t}\right)
        \theta(x) \frac{1}{t^d}\diff \xi \diff x \diff t.
      \end{split}
    \end{equation*}
    Applying Lemma~\ref{l:young} and \ref{l:L1-control} we get
    \begin{equation*}
      \begin{split}
        (v-\vepsilon, \theta)_{\Rd} &\leq
        |v|_{H^{s}(\Rd)}\|\theta\|_{L^2(\Rd)} \int_0^1
        \frac{(1-t)^{s-1}}{t^d (s-1)!}  \left\| \left|\frac{x}{t}\right|^{s}
          \depsilon\left(-\frac{x}{t}\right) \right\|_{L^1(\Rd)} \diff
        t \\
        & \preceq   \varepsilon^{s} |v|_{H^{s}(\Rd)}\|\theta\|_{L^2(\Rd)}.
      \end{split}
    \end{equation*}
    
   {\boxed{2} To show \eqref{i:reg-l2-esti-Rd} for $s=0$, we note that the regularization operator is $L^2$-stable, \ie for any $\theta\in L^2(\Rd)$,
   \[
  \begin{aligned}
    	(\vepsilon, \theta) &\le \int_\Rd\int_\Rd |v(y)\depsilon(x-y)\theta(x)|\diff y\diff x  \\
	&\le \|v\|_{L^2(\Rd)}\|\depsilon\|_{L^1(\Rd)}\|\theta\|_{L^2(\Rd)}\preceq  \|v\|_{L^2(\Rd)}\|\theta\|_{L^2(\Rd)} .
  \end{aligned}
  \]
  Here we applied again Young's inequality (Lemma~\ref{l:young}) together with Lemma~\ref{l:L1-control} for $m=0$. 
  Hence, $\|\vepsilon\|_{L^2(\Rd)}\preceq\|v\|_{L^2(\Rd)}$, and $\|v-\vepsilon\|_{L^2(\Rd)}\preceq\|v\|_{L^2(\Rd)}$ follows from the triangle 
  inequality.}

    \boxed{3} Taking the sup over $\theta$ with unit $L^2(\Rd)$ norm, and applying
    interpolation estimates between $s=0$ and $s=k+1$, the proof is
    complete by a density argument.
  
\end{proof}

The above lemma immediately implies the following convergence result
in $\Rd$:
  \begin{theorem}[Regularization estimates in
    $\Rd$]\label{t:estimates-Rd}
    Let $F\in H^m(\Rd)$,
    $m \in [-k-1,0]$. For $-k-1\leq s \leq m\leq 0$, there holds
    \begin{equation}\label{i:stability-Rd-F}
      \|F-\Fepsilon\|_{H^{s}(\Rd)} \preceq \varepsilon^{m-s}\|F\|_{H^m(\Rd)} .
    \end{equation}
    Moreover, let $v\in H^m(\Rd)$, $m \in [0,k+1]$. For $s\in [-k-1,m]$ so that
    $m-s \leq k+1$, there holds
    \begin{equation}\label{i:stability-Rd}
      \|v-\vepsilon\|_{H^{s}(\Rd)} \preceq \varepsilon^{m-s}\|v\|_{H^m(\Rd)} .
    \end{equation}
    Here we identify $v$ in the negative Sobolev space by the duality pairing:
    $\langle v,\cdot\rangle = (v,\cdot)_{L^2(\Rd)}$.
  \end{theorem}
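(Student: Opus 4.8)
The plan is to reduce the whole statement to Lemma~\ref{l:reg-Rd} using three devices: stability of the regularization on every Sobolev scale, a duality identity that exchanges the regularization by $\psi$ with the regularization by the reflected kernel, and interpolation. Write $R_\varepsilon v:=v^\varepsilon=\depsilon*v$ and $R_\varepsilon':=\mathrm{Id}-R_\varepsilon$. First I would record two elementary facts. (i) Since $\|\depsilon\|_{L^1(\Rd)}\preceq1$ (Lemma~\ref{l:L1-control} with $m=0$), $R_\varepsilon$ is convolution by an $L^1$ function, hence a Fourier multiplier with $L^\infty$ symbol of norm $\preceq1$, so $\|R_\varepsilon'\|_{H^\sigma(\Rd)\to H^\sigma(\Rd)}\preceq1$ for \emph{every} $\sigma\in\Real$ (equivalently one may argue by $L^2$-stability --- see the proof of Lemma~\ref{l:reg-Rd} --- together with commutation of $R_\varepsilon$ with $\partial_i$, interpolation for $\sigma\ge0$, and duality for $\sigma<0$). (ii) Setting $\tilde\psi(x):=\psi(-x)$, the moment condition Assumption~\ref{a:app-dirac}.2 is equivalent to $\int_\Rd z_i^{\,j}\psi(z)\diff z=\delta_{j0}$ for $0\le j\le k$, a property preserved by $z\mapsto -z$; hence $\tilde\psi$ also satisfies Assumption~\ref{a:app-dirac} with the same $k$ and $r_0$. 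Denote by $\tilde R_\varepsilon'$ the corresponding operator.

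Next I would establish the duality identity. By Fubini's theorem, $(R_\varepsilon v,\phi)_{\Rd}=(v,\tilde R_\varepsilon\phi)_{\Rd}$ for $v,\phi\in L^2(\Rd)$, and therefore $(R_\varepsilon'v,\phi)_{\Rd}=(v,\tilde R_\varepsilon'\phi)_{\Rd}$; with the $H^{-s}$--$H^s$ pairing identification this says that $R_\varepsilon'$ and $\tilde R_\varepsilon'$ are mutually adjoint, so $\|R_\varepsilon'\|_{H^a(\Rd)\to H^b(\Rd)}=\|\tilde R_\varepsilon'\|_{H^{-b}(\Rd)\to H^{-a}(\Rd)}$ for all $a,b\in\Real$. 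Applying Lemma~\ref{l:reg-Rd} with $s=k+1$ to $\psi$ gives $\|R_\varepsilon'\|_{H^{k+1}\to L^2}\preceq\varepsilon^{k+1}$; applying it to $\tilde\psi$ and using the identity above gives $\|R_\varepsilon'\|_{L^2\to H^{-(k+1)}}\preceq\varepsilon^{k+1}$.

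Then I would run one interpolation argument. Fix $\varepsilon$ and let $\Sigma:=\{(a,b)\in\Real^2:\|R_\varepsilon'\|_{H^a(\Rd)\to H^b(\Rd)}\preceq\varepsilon^{a-b}\}$. Because $[H^{a_0}(\Rd),H^{a_1}(\Rd)]_\theta=H^{(1-\theta)a_0+\theta a_1}(\Rd)$ and operator norms are submultiplicative under interpolation, $\Sigma$ is convex: the interpolated exponent $(1-\theta)(a_0-b_0)+\theta(a_1-b_1)$ equals $a_\theta-b_\theta$ since $(a,b)\mapsto a-b$ is affine (only finitely many interpolation steps from fixed generators are used, so the hidden constants stay controlled). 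By the first two paragraphs $\Sigma$ contains the diagonal $\{(\sigma,\sigma):\sigma\in\Real\}$ and the points $(k+1,0)$ and $(0,-(k+1))$; an elementary check shows that the convex hull of these sets contains both parameter regions of the theorem, namely $\{(m,s):-k-1\le s\le m\le0\}$ for \eqref{i:stability-Rd-F} and $\{(m,s):0\le m\le k+1,\ m-s\le k+1,\ s\le m\}$ for \eqref{i:stability-Rd}. Estimate \eqref{i:stability-Rd} is then precisely the statement $(m,s)\in\Sigma$ for the indicated range. For \eqref{i:stability-Rd-F}, the definition of $F^\varepsilon$ in Definition~\ref{d:regularization} gives $\langle F-F^\varepsilon,v\rangle=\langle F,v-v^\varepsilon\rangle$ for $v\in H^{-s}(\Rd)$, so taking the supremum over $\|v\|_{H^{-s}(\Rd)}=1$ bounds $\|F-F^\varepsilon\|_{H^s(\Rd)}$ by $\|F\|_{H^m(\Rd)}\,\|R_\varepsilon'\|_{H^{-s}\to H^{-m}}\preceq\varepsilon^{m-s}\|F\|_{H^m(\Rd)}$, using that $(-s,-m)\in\Sigma$ whenever $-k-1\le s\le m\le0$.

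The step I expect to require the most care is the convexity bookkeeping covering the ``mixed'' regime $s<0<m$ of \eqref{i:stability-Rd}: this case is not reachable from Lemma~\ref{l:reg-Rd} by duality alone (duality only swaps the two same-sign scales), so one genuinely has to interpolate across zero --- concretely, between the diagonal point $(0,0)$ and a point of the segment joining $(k+1,0)$ to $(0,-(k+1))$, the segment itself being obtained by interpolating the two extreme estimates. The remaining ingredients (the Fourier-multiplier bound, the interpolation identities on $\Rd$, the change of variables showing that $\tilde\psi$ is admissible, and the verification that the convex hull covers the stated regions) are routine.
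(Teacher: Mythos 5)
Your proposal is correct, and at its core it relies on the same ingredients as the paper's proof: Lemma~\ref{l:reg-Rd}, uniform stability of the regularization, the duality identity $\langle F-\Fepsilon,w\rangle=\langle F,w-w^\varepsilon\rangle$ from Definition~\ref{d:regularization}, and interpolation. The organization, however, is genuinely different. The paper proves stability only where needed, by commuting $D^\alpha$ with the convolution for $v\in C^\infty_c(\Rd)$ to get $\|v-\vepsilon\|_{H^{k+1}(\Rd)}\preceq\|v\|_{H^{k+1}(\Rd)}$ and interpolating with \eqref{i:reg-l2-esti-Rd}; it then dualizes to obtain the two endpoint bounds \eqref{i:estimate-Hminusk-minus1} and \eqref{i:estimate-Hminus-s} for $F$ and interpolates again; finally, for $m\geq 0> s$, it interpolates between the $m=0$, $s\leq 0$ bound and the $s=0$, $m\geq 0$ bound. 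You instead establish uniform boundedness of $\mathrm{Id}-R_\varepsilon$ on \emph{every} $H^\sigma(\Rd)$ via the $L^1$ Fourier-multiplier argument and encode all estimates as convexity of the set $\Sigma$ of admissible exponent pairs; this buys a single, systematic interpolation step that subsumes the paper's three cases, at the cost of some convex-hull bookkeeping. Your explicit treatment of the reflected kernel $\tilde\psi$ is a point of extra rigor worth keeping: the paper's final step applies its functional estimate to $v$ identified through $(v,\cdot)_{L^2(\Rd)}$, which tacitly identifies the convolution $\vepsilon$ with the duality-based regularization, and these two differ by a reflection of $\psi$ when $\psi$ is not even; your observation that $\tilde\psi$ satisfies Assumption~\ref{a:app-dirac} with the same $k$ and $r_0$ is exactly what justifies that identification, so the mixed-sign case $s<0<m$ is handled correctly in your version as well.
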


\begin{proof}
Let us show the desired estimates in three steps.

\boxed{1} For $v\in C^\infty_c(\Rd)$, the definition of the weak derivative of $D^\alpha \depsilon$ for $|\alpha|\le k+1$ yields that for
  $x\in \Rd$,
\[
\begin{aligned}
	D^\alpha \vepsilon(x) &= \int_\Rd v(y)D^\alpha\depsilon(x-y)\diff y \\
	& = \int_\Rd D^\alpha v(y) \depsilon(x-y)\diff y = (D^\alpha v)^\varepsilon(x) .
\end{aligned}
\]
Hence, we apply Lemma~\ref{l:reg-Rd} to $(\sum_{|\alpha|=k+1} D^\alpha  v)^\varepsilon$ with $s=0$ to get
\begin{equation}\label{i:hs-stability-Rd}
	\|v-\vepsilon\|_{H^{k+1}(\Rd)} \preceq \|v\|_{H^{k+1}(\Rd)}.
\end{equation}
Interpolating the estimates between \eqref{i:reg-l2-esti-Rd}
and \eqref{i:hs-stability-Rd} implies \eqref{i:stability-Rd} for
$0\leq s \leq m \leq k+1$.

\boxed{2} For {$F\in H^{m}(\Rd)$} with $-k-1\le m\le 0$, we have
\begin{equation}
  \label{i:estimate-Hminusk-minus1}
  \begin{split}
    \| F-F^\varepsilon\|_{H^{m}(\Rd)}  & :=\sup_{w\in H^{-m}(\Rd)}
    \frac{\langle F- F^\varepsilon, w \rangle}{\|w\|_{H^{-m}(\Rd)}}  \\
    & :=   \sup_{w\in H^{-m}(\Rd)}
    \frac{\langle F,  w-w^\varepsilon\rangle}{\|w\|_{H^{-m}(\Rd)}} \\
    & \preceq \sup_{w\in H^{-s}(\Rd)} \frac{\| F \|_{H^{m}(\Rd)} \|
      w-w^\varepsilon\|_{H^{-m}(\Rd)}}{\|w\|_{H^{-m}(\Rd)}}\\
    & \preceq \| F \|_{H^{m}(\Rd)}.
\end{split}
\end{equation}
Similarly, for $F\in H^{m}(\Rd)$ with $-k-1\le m\le 0$,
\begin{equation}
  \label{i:estimate-Hminus-s}
  \begin{split}
    \| F-F^\varepsilon\|_{H^{-k-1}(\Rd)}  & \preceq \sup_{w\in H^{k+1}(\Rd)} \frac{\| F \|_{H^{m}(\Rd)} \|
      w-w^\varepsilon\|_{H^{-m}(\Rd)}}{\|w\|_{H^{k+1}(\Rd)}}\\
    & \preceq \varepsilon^{k+1+m} \| F \|_{H^{m}(\Rd)} .
\end{split}
\end{equation}
So the first assertion follows from the interpolation {between
\eqref{i:estimate-Hminusk-minus1} and
\eqref{i:estimate-Hminus-s}.}

\boxed{3} For $v\in H^m(\Rd)$, $m \in [0,k+1]$, interpolating the result ($s\leq0$) between
$\| v-v^\varepsilon \|_{H^{s}(\Rd)} \preceq
\varepsilon^{-s}\|v\|_{L^2(\Rd)}$ and
$\| v-v^\varepsilon \|_{L^{2}(\Rd)} \preceq
\varepsilon^{m}\|v\|_{H^m(\Rd)}$ concludes the proof of the second desired estimate, with $m-s\leq k+1$.
\end{proof}

\subsection{Bounded domains}

The generalization of the previous results in bounded domains is
non-trivial, due to the presence of boundaries. We start by providing
some results that work well when restricting $v$ to a region which is
strictly contained in $\Omega$. Let this region be defined by a
Lipschitz domain $\omega\subset\Omega$, and assume that there exists a
positive constant $c_0$ such that
 \begin{equation}\label{i:interface-assumption}
   \dist(\partial\omega,\partial\Omega)> c_0.
 \end{equation}

The following result relies on the interior regularity of
$v$. More precisely, according to \eqref{i:interface-assumption}, we
assume that the regularization parameter satisfies
$\varepsilon\le \varepsilon_0\leq c_0$
for some fixed $\varepsilon_0$, and we set
\begin{equation}\label{e:extension}
	\Oepsilon = \bigcup_{x\in\omega}\Bepsilon(x) \subset \Omega.
\end{equation}
Assuming that $v\in H^s(\Oe)\cap L^1(\Omega)$ with $s\in [0,k+1]$, we 
next provide similar error estimates compared with the unbounded case
in the previous section. We note that results below are instrumental 
to our error estimates for the numerical approximation of the model
problems in the next section.

\begin{lemma}[$L^2(\omega)$ estimate]\label{l:k-reg-omega}
  For $0\le s \le k+1$, and
  $\varepsilon \leq \varepsilon_0\leq c_0$, let
  {$v\in L^1(\Omega)\cap H^s(\omega^{\varepsilon_0})$}, and let
  $\vepsilon$ be defined as in Definition~\ref{d:regularization}. Then
 there holds
\begin{equation}\label{i:k-reg-l2-esti}
	\|v-\vepsilon\|_{L^2(\omega)} \preceq \varepsilon^{s} {\|v\|_{H^s(\Oe)}}.
\end{equation}
\end{lemma}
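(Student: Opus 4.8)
The plan is to reduce the bounded-domain estimate to the unbounded-domain estimate of Lemma~\ref{l:reg-Rd} by a localization-and-extension argument. The key observation is that for any $x\in\omega$, the regularization $\vepsilon(x) = \int_\Omega \depsilon(x-y)v(y)\diff y$ only sees values of $v$ on $\Bepsilon(x)\subset\Oepsilon\subset\omega^{\varepsilon_0}$, because $\depsilon$ is supported in $B_{\varepsilon r_0}$ and $\varepsilon\le\varepsilon_0\le c_0$. Hence if we let $\tilde v$ be a Sobolev extension of $v|_{\omega^{\varepsilon_0}}$ to all of $\Rd$ with $\|\tilde v\|_{H^s(\Rd)}\preceq \|v\|_{H^s(\omega^{\varepsilon_0})}$ (such an extension exists since $\omega^{\varepsilon_0}$ is a bounded Lipschitz domain, by Stein's extension theorem, with constant depending only on $\omega^{\varepsilon_0}$ and hence on $c_0$ and the geometry, which is fixed), then $\tilde v = v$ on $\Oepsilon$, and for $x\in\omega$ we have $\vepsilon(x) = \tilde v^\varepsilon(x)$ computed as a full $\Rd$-convolution. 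Therefore $v(x) - \vepsilon(x) = \tilde v(x) - \tilde v^\varepsilon(x)$ for a.e.\ $x\in\omega$.

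First I would make this reduction precise: fix the extension operator $E\colon H^s(\omega^{\varepsilon_0})\to H^s(\Rd)$ and set $\tilde v = Ev$. One subtlety is that $v$ is only assumed to be in $L^1(\Omega)\cap H^s(\omega^{\varepsilon_0})$, not globally $H^s$; but since only the restriction to $\Oepsilon\subset\omega^{\varepsilon_0}$ enters the convolution for $x\in\omega$, the $L^1(\Omega)$ assumption is only needed to make $\vepsilon$ well-defined and plays no role in the estimate. I would check that the identity $\vepsilon(x)=\tilde v^\varepsilon(x)$ holds for $x\in\omega$ by splitting $\int_\Omega = \int_{\Bepsilon(x)} + \int_{\Omega\setminus\Bepsilon(x)}$ and noting $\depsilon(x-y)=0$ on the second piece and $v=\tilde v$ on the first.

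Then I would invoke Lemma~\ref{l:reg-Rd} applied to $\tilde v\in H^s(\Rd)$ with $0\le s\le k+1$:
\[
  \|v-\vepsilon\|_{L^2(\omega)} = \|\tilde v - \tilde v^\varepsilon\|_{L^2(\omega)} \le \|\tilde v - \tilde v^\varepsilon\|_{L^2(\Rd)} \preceq \varepsilon^s \|\tilde v\|_{H^s(\Rd)} \preceq \varepsilon^s \|v\|_{H^s(\omega^{\varepsilon_0})},
\]
which is exactly \eqref{i:k-reg-l2-esti} (noting $\|v\|_{H^s(\Oe)}\le\|v\|_{H^s(\omega^{\varepsilon_0})}$, or one can state the extension bound directly in terms of $\omega^{\varepsilon_0}$ as written). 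The constant absorbs the norm of the extension operator, which depends only on $s$, $d$, and the fixed domain $\omega^{\varepsilon_0}$ (equivalently on $c_0$ and the Lipschitz character of $\omega$), consistent with the $\preceq$ convention.

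The main obstacle — really the only non-routine point — is justifying the existence of a uniformly bounded Sobolev extension operator on $\omega^{\varepsilon_0}$ for the full range $s\in[0,k+1]$, including non-integer $s$. For integer $s$ this is Stein's (or Calderón's) extension theorem on Lipschitz domains; for fractional $s$ one either uses the interpolation characterization $H^s=[L^2,H^2]_s$ already recalled in Section~\ref{s:prelim} together with the fact that the integer-order extension operators are simultaneously bounded, or one appeals directly to extension results for Sobolev–Slobodeckij spaces on Lipschitz domains. I would also need to confirm that $\omega^{\varepsilon_0}$ is indeed Lipschitz (a tubular neighborhood of the Lipschitz domain $\omega$ at a fixed scale is Lipschitz, which is standard), so that the extension constant does not degenerate as $\varepsilon\to 0$ — this is why the neighborhood is taken at the fixed radius $\varepsilon_0$ rather than at $\varepsilon$.
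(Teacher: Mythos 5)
Your reduction is essentially correct, but it is a genuinely different route from the paper's. The paper does not invoke any Sobolev extension theorem: it re-runs, locally, the Taylor-expansion/duality/Young's-inequality argument of Lemma~\ref{l:reg-Rd}, using only \emph{zero} extensions of $D^\beta v$ and of the test function $\theta$ to $\Rd$ (legitimate because the Taylor remainder only involves values of $D^\beta v$ on segments joining $x\in\omega$ to $y\in\Bepsilon(x)$, hence inside $\Oe$), handles $s=0$ by the same $L^2$-stability computation, and interpolates. That buys the right-hand side $\|v\|_{H^s(\Oe)}$ over the $\varepsilon$-neighbourhood, exactly as stated in \eqref{i:k-reg-l2-esti}. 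Your argument imports Lemma~\ref{l:reg-Rd} wholesale, which is shorter and cleaner, but it can only deliver $\varepsilon^{s}\|v\|_{H^s(\omega^{\varepsilon_0})}$, because a Stein extension from the shrinking domain $\omega^{\varepsilon}$ with a constant uniform in $\varepsilon$ is not available; note that $\|v\|_{H^s(\Oe)}\le\|v\|_{H^s(\omega^{\varepsilon_0})}$ goes the wrong way for recovering the literal statement, so your "which is exactly \eqref{i:k-reg-l2-esti}" is not quite right. The weaker form is, however, precisely what Theorem~\ref{t:estimates-omega} records and all that is used in Corollary~\ref{c:k-reg-omega}, Lemma~\ref{l:estimate-Hminus-s-Omega} and the error estimates of Sections~\ref{ss:model-problem} and~\ref{s:ib}, so it is an acceptable proof of the paper's working estimate provided you state it with $\omega^{\varepsilon_0}$ on the right-hand side.

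Two technical points need attention. First, your claim that $\omega^{\varepsilon_0}$ is Lipschitz because it is a fixed-scale tubular neighbourhood of a Lipschitz domain is not standard and is false in general: the outward offset of a Lipschitz set can develop cusps where offset spheres issued from different parts of $\partial\omega$ meet tangentially. The repair is routine: restrict to $\varepsilon\le\varepsilon_0/2$ (the paper itself imposes such restrictions in Corollary~\ref{c:k-reg-omega} and Theorem~\ref{t:reg-estimate}) and extend from any $C^\infty$ domain $D$ with $\overline{\omega^{\varepsilon_0/2}}\subset D\subset\omega^{\varepsilon_0}$, which exists because $\overline{\omega^{\varepsilon_0/2}}$ is compact in the open set $\omega^{\varepsilon_0}$; a Stein--Calder\'on extension from $D$, combined with interpolation to reach non-integer $s\in[0,k+1]$, finishes the argument. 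Second, the localization identity $\vepsilon(x)=\tilde v^{\varepsilon}(x)$ for $x\in\omega$ uses $\mathrm{supp}\,\depsilon\subset B_{\varepsilon r_0}$ with the tacit normalization $r_0\le 1$ (or $\varepsilon r_0\le\varepsilon_0$); the paper is equally cavalier about $r_0$ in this section, so this is only a bookkeeping remark, not a gap specific to your proof.
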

\begin{proof}
  We denote by $\widetilde\cdot$ the zero extension from $\omega$, $\Oe$ or $\Omega$ to $\Rd$. 
  Since $\varepsilon<\dist(\partial\omega,\partial\Omega)$, for any $x\in \omega$, $\Bepsilon(x)\subset\Omega$ and
  \[
    \int_{\Omega} \depsilon(x-y) \diff y = \int_{\Bepsilon(x)} \depsilon(x-y) \diff y = \int_\Rd \depsilon(y)\diff y = 1.
  \]
  For any $\theta\in L^2(\omega)$ and $v\in C_0^\infty(\overline\Oe)$, we follow the proof of Lemma~\ref{l:reg-Rd} to get
  \begin{equation}\label{i:reg-diff}
    \begin{aligned}
      (v-\vepsilon, \theta)_\omega &= \int_\omega v(x) \theta(x) \bigg(\int_{\Omega} \depsilon(x-y) \diff y\bigg) \diff x \\
      &\qquad\qquad - \int_\omega \int_\Omega v(y)\depsilon(x-y) \theta(x) \diff y\diff x\\
      &= -\int_\omega \int_\Oe (v(y) - v(x))\depsilon(x-y) \theta(x) \diff y\diff x \\
      &= -\int_\omega \int_\Oe r_x(y)\depsilon(x-y) \theta(x) \diff y\diff x \\
      &\preceq \int_0^1 \int_{\Rd}\int_\Rd
      |\widetilde{D^\beta v} (\xi)| \bigg|\left(\frac{\xi-x}{t}\right)^\beta\depsilon\left(-\frac{\xi-x}{t}\right) \bigg|
      |\widetilde{\theta}(x)| \frac{1}{t^d}\diff \xi \diff x \diff t.
    \end{aligned}
  \end{equation}
  Here we apply again Assumption~\ref{a:app-dirac}.2 for the last
  equality above. When it comes to the last inequality in \eqref{i:reg-diff},
  we note that for a fixed $x\in\omega$ and for any $y\in\Oe$, the change of
  variable $\xi = t(y-x)+x $ belongs to $\Oe$ for any $t\in (0,1)$. 
  {Hence we
  proceed following the proof of Lemma~\ref{l:reg-Rd}, Step 1 and apply
  Lemma~\ref{l:young} again to conclude the proof for a positive integer $s$. 
  Replacing $v$ and $\theta$ with $\widetilde v$ and $\widetilde \theta$ in the proof of  Lemma~\ref{l:reg-Rd}, Step 2, we obtain \eqref{i:k-reg-l2-esti}
  with $s=0$. The assertion for any $s\in[0,k+1]$ follows from the interpolation between $s=0$ and $s=k+1$.}
\end{proof}

\begin{corollary}[$H^s(\omega)$ estimate]\label{c:k-reg-omega}
  For $0\le s\leq m \le k+1$, and
  $\varepsilon_0\leq c_0$, let
  $v\in L^1(\Omega)\cap H^m(\Oe)$, and let
  $\vepsilon$ be defined by Definition~\ref{d:regularization}. Then there
  exists $\varepsilon_1 > 0$ small enough so that for $\varepsilon < \varepsilon_1$,
\begin{equation}\label{i:k-reg-Hm-esti}
	\|v-\vepsilon\|_{H^s(\omega)} \preceq \varepsilon^{m-s} \|v\|_{H^m(\Oe)}.
\end{equation}
\end{corollary}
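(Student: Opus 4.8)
The plan is to deduce Corollary~\ref{c:k-reg-omega} from Lemma~\ref{l:k-reg-omega} by combining a commutation identity between differentiation and the regularization operator with two interpolation arguments. As in the previous proofs, I would first argue for smooth $v$ and recover the general statement by density at the very end, using that for $x\in\omega$ the value $\vepsilon(x)$ depends only on $v|_{\Oe}$ (so that only the $H^m(\Oe)$ regularity, together with the $L^1(\Omega)$ hypothesis needed to make the convolution meaningful, really enters). Thanks to \eqref{i:interface-assumption} and \eqref{e:extension} I may fix $\varepsilon_1\le\varepsilon_0$ small enough that $\Oe$ is compactly contained in $\Omega$ whenever $\varepsilon<\varepsilon_1$.

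\emph{Step 1: commutation.} For $v\in C^\infty(\overline{\Oe})$ and $x\in\omega$, the change of variables $z=x-y$ together with $\operatorname{supp}\depsilon(x-\cdot)\subset\Bepsilon(x)\subset\Oe$ gives $\vepsilon(x)=\int_{\Rd}\depsilon(z)\,v(x-z)\,\diff z$; differentiating under the integral sign (justified since $\depsilon\in L^1(\Rd)$ and $v$ is smooth, and requiring no weak differentiability of $\depsilon$) yields
\begin{equation*}
  D^\alpha\vepsilon(x)=(D^\alpha v)^\varepsilon(x),\qquad |\alpha|\le k+1,\quad x\in\omega .
\end{equation*}
\emph{Step 2: integer stability.} For an integer $j$ with $0\le j\le k+1$, Step~1 and Lemma~\ref{l:k-reg-omega} applied with $s=0$ to each $D^\alpha v$ give
\begin{equation*}
  \|v-\vepsilon\|_{H^j(\omega)}^2=\sum_{|\alpha|\le j}\|D^\alpha v-(D^\alpha v)^\varepsilon\|_{L^2(\omega)}^2\preceq\sum_{|\alpha|\le j}\|D^\alpha v\|_{L^2(\Oe)}^2\preceq\|v\|_{H^j(\Oe)}^2 ,
\end{equation*}
so $T^\varepsilon:=\mathrm{Id}-(\cdot)^\varepsilon$ is bounded $H^j(\Oe)\to H^j(\omega)$ uniformly in $\varepsilon<\varepsilon_1$.

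\emph{Step 3: fractional stability and conclusion.} For non-integer $m\in(0,k+1)$ with $j=\lfloor m\rfloor$, viewing $T^\varepsilon$ as an operator between the interpolation couples $(H^j(\Oe),H^{j+1}(\Oe))$ and $(H^j(\omega),H^{j+1}(\omega))$ and invoking Step~2 at both endpoints, the interpolation property of the real method gives $\|v-\vepsilon\|_{H^m(\omega)}\preceq\|v\|_{H^m(\Oe)}$ with a constant independent of $\varepsilon$; hence $\|T^\varepsilon\|_{H^m(\Oe)\to H^m(\omega)}\preceq1$ for every real $m\in[0,k+1]$. Now fix $0\le s\le m\le k+1$; if $m=0$ then $s=0$ and the claim is Lemma~\ref{l:k-reg-omega}, so assume $m>0$. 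By Lemma~\ref{l:k-reg-omega} (with regularity exponent $m$) and the fractional stability just proved, $T^\varepsilon$ maps $H^m(\Oe)$ into $L^2(\omega)$ with norm $\preceq\varepsilon^m$ and into $H^m(\omega)$ with norm $\preceq1$; interpolating the target spaces via $[L^2(\omega),H^m(\omega)]_s=H^s(\omega)$ yields
\begin{equation*}
  \|v-\vepsilon\|_{H^s(\omega)}\preceq(\varepsilon^m)^{1-s/m}\,1^{s/m}\,\|v\|_{H^m(\Oe)}=\varepsilon^{m-s}\|v\|_{H^m(\Oe)} ,
\end{equation*}
which is \eqref{i:k-reg-Hm-esti} for smooth $v$; a density argument as indicated above finishes the proof.

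\emph{Main obstacle.} The only genuinely delicate point is Step~1: it is here that the geometric separation \eqref{i:interface-assumption} enters, guaranteeing that the mollification of $v$ over points of $\omega$ never reaches $\partial\Omega$, so that the bounded-domain convolution behaves exactly like its free-space counterpart and the identity $D^\alpha\vepsilon=(D^\alpha v)^\varepsilon$ holds on $\omega$ without boundary contributions; this is also what dictates the choice of $\varepsilon_1$. Everything else is bookkeeping — one must keep the interpolation in the target smoothness $s$ separate from the pair-to-pair interpolation in the domain smoothness $m$, and check that all interpolation constants depend only on the exponents and not on $\varepsilon$, which is automatic.
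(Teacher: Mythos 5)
Your argument is correct and is essentially the paper's own proof: commute derivatives with the regularization on $\omega$, apply Lemma~\ref{l:k-reg-omega} with $s=0$ componentwise to get uniform $H^m(\Oe)\to H^m(\omega)$ stability of $\mathrm{Id}-(\cdot)^\varepsilon$, and then interpolate this against the $\varepsilon^{m}$ rate in $L^2(\omega)$ from the same lemma. The only differences are cosmetic: you justify the commutation by a change of variables and differentiation under the integral directly on $\omega$ (avoiding any formal differentiation of $\depsilon$), whereas the paper integrates by parts on the nested neighborhoods $\omega^{\varepsilon_0/2^j}$ (hence its restriction $\varepsilon<\varepsilon_0/2^m$, matching your $\varepsilon_1$), and you spell out the pair-interpolation needed for non-integer $m$, which the paper leaves implicit.
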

\begin{proof}
Let $m$ be a positive integer. Integration by parts yields that for $v\in
  C^\infty_c(\overline\Oe)$ with $x\in \omega^{\varepsilon_{0}/2}$ and for $\varepsilon < \varepsilon_0/2$,
\[
\begin{aligned}
    D v^\varepsilon(x) &= \int_\Omega D_x \delta^\varepsilon(x-y) v(y) \diff y
    = \int_\Omega -D_y \delta^\varepsilon(x-y) v(y) \diff y \\ 
    &= \int_\Omega \delta^\varepsilon(x-y) D_y v(y) \diff y = (D v)^\varepsilon(x).
\end{aligned}
\]
Repeating the above argument shows that given a multi-index $\beta$ so that $|\beta|=m$,
$D^\beta \vepsilon(x) = {(D^\beta v)^\varepsilon(x)}$ for $x\in \omega^{\varepsilon_0/2^m}$
when $\varepsilon < \varepsilon_0/2^m$.
  We then apply \eqref{i:k-reg-l2-esti} in Lemma~\ref{l:k-reg-omega} with $s=0$ to
  $(\sum_{|\beta|\leq m} D^\beta v)^\varepsilon$ and a density argument to obtain that
  \[
  	\|v-\vepsilon\|_{H^m(\omega)} \preceq \|v\|_{H^m(\omega^{\varepsilon_0/2^m})} \leq \|v\|_{H^m(\omega^{\varepsilon_0})} .
  \]
  Interpolating the above estimate with \eqref{i:k-reg-l2-esti} yields the desired estimate.
\end{proof}

 Given a functional $F\in H^s(\Omega)$ with $-k-1\le s\le 0$, we say 
 $\text{supp}_m(F)\subseteq \omega$ for some $m\in [s,0]$  if 
 $F \in H^{m}(\omega)$ for some $s\in [-k-1,m]$ so that
  \[
  	\langle F , w\rangle_{H^s(\Omega), H^{-s}(\Omega)} 
	\preceq \|F\|_{H^{m}(\omega)} \|w\|_{H^{-m}(\omega)}, \quad\text{for all } w\in H^{-s}(\Omega) .
  \]

\begin{lemma}\label{l:estimate-Hminus-s-Omega}
  Let $F\in H^{s}(\Omega)$ with $-k-1\leq s\leq 0$, and $\emph{supp}_m(F)\subseteq \omega$ for some $m\in [s,0]$.  Then, there holds
  \begin{equation}
    \label{e:estimates-H-minus-s-Omega}
    \|F-\Fepsilon\|_{H^{s}(\Omega)} \preceq \varepsilon^{m-s} \|F\|_{H^{m}(\omega)} .	
  \end{equation}
\end{lemma}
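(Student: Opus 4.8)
The plan is to follow the duality (Aubin--Nitsche-type) argument of Step~$\boxed{2}$ in the proof of Theorem~\ref{t:estimates-Rd}, but localized: the global $H^{m}(\Rd)$--$H^{-m}(\Rd)$ pairing is replaced by the defining inequality of $\mathrm{supp}_m(F)\subseteq\omega$, and Lemma~\ref{l:reg-Rd} is replaced by its interior analogue, Corollary~\ref{c:k-reg-omega}. First I would write, using the definition of $\Fepsilon$ from Definition~\ref{d:regularization} and the linearity of $F$,
\[
\|F-\Fepsilon\|_{H^{s}(\Omega)}
=\sup_{0\neq w\in H^{-s}(\Omega)}\frac{\langle F-\Fepsilon,\,w\rangle_{H^{s}(\Omega),H^{-s}(\Omega)}}{\|w\|_{H^{-s}(\Omega)}}
=\sup_{0\neq w\in H^{-s}(\Omega)}\frac{\langle F,\,w-w^{\varepsilon}\rangle_{H^{s}(\Omega),H^{-s}(\Omega)}}{\|w\|_{H^{-s}(\Omega)}} .
\]
Since $-s\in[0,k+1]$, each such $w$ is a genuine function, $w\in L^{2}(\Omega)\subseteq L^{1}(\Omega)$, so that $w^{\varepsilon}$ is well defined by Definition~\ref{d:regularization}.

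Next I would invoke the support hypothesis. As $\mathrm{supp}_m(F)\subseteq\omega$ and $w-w^{\varepsilon}\in H^{-s}(\Omega)$, its defining inequality gives
\[
\langle F,\,w-w^{\varepsilon}\rangle_{H^{s}(\Omega),H^{-s}(\Omega)}
\preceq \|F\|_{H^{m}(\omega)}\,\|w-w^{\varepsilon}\|_{H^{-m}(\omega)} .
\]
The key point is that the quantity that must decay, $\|w-w^{\varepsilon}\|_{H^{-m}(\omega)}$, is measured over the interior subdomain $\omega$, where no boundary effects enter once $\varepsilon$ is small enough that $\omega^{\varepsilon}\subset\Omega$. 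I would then apply Corollary~\ref{c:k-reg-omega} to $v=w$ with $-m$ and $-s$ in place of the indices $s$ and $m$ there; the required hypotheses $0\le -m\le -s\le k+1$ are exactly $s\le m\le 0$ and $s\ge -k-1$, which hold by assumption. Hence, for $\varepsilon$ sufficiently small (as provided by Corollary~\ref{c:k-reg-omega}),
\[
\|w-w^{\varepsilon}\|_{H^{-m}(\omega)}\preceq \varepsilon^{\,m-s}\,\|w\|_{H^{-s}(\omega^{\varepsilon})}\le \varepsilon^{\,m-s}\,\|w\|_{H^{-s}(\Omega)},
\]
where the last step uses $\omega^{\varepsilon}\subset\Omega$ and the monotonicity of the Sobolev (Sobolev--Slobodeckij, resp.\ integer) norms under restriction of the domain. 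Combining the three displays and taking the supremum over $w$ yields $\|F-\Fepsilon\|_{H^{s}(\Omega)}\preceq \varepsilon^{\,m-s}\|F\|_{H^{m}(\omega)}$, as claimed.

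I expect the only genuinely delicate point to be the admissibility of $w-w^{\varepsilon}$ as a test object in the pairing with $F$: one must know that $w^{\varepsilon}\in H^{-s}(\Omega)$ so that $\langle F,\,w-w^{\varepsilon}\rangle$ is meaningful, and near $\partial\Omega$ the zero-extension convolution $w^{\varepsilon}$ need not inherit the full regularity of $w$ with an $\varepsilon$-uniform constant. This is harmless here, however, since $F$ pairs with all of $H^{-s}(\Omega)$ and the only quantity required to be small is the $H^{-m}(\omega)$ norm over the interior region, where Corollary~\ref{c:k-reg-omega} applies cleanly; the remaining steps---the index bookkeeping, passing from $\omega^{\varepsilon}$ to $\Omega$, and using the fractional case of Corollary~\ref{c:k-reg-omega}---are routine.
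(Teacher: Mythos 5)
Your argument is essentially the paper's own proof: the paper simply states that one repeats Step~2 of the proof of Theorem~\ref{t:estimates-Rd} (the duality manipulation $\langle F-\Fepsilon,w\rangle=\langle F,w-w^\varepsilon\rangle$) with Lemma~\ref{l:reg-Rd} replaced by Lemma~\ref{l:k-reg-omega} and Corollary~\ref{c:k-reg-omega}, which is exactly what you do, using the $\mathrm{supp}_m(F)\subseteq\omega$ inequality to localize the pairing and Corollary~\ref{c:k-reg-omega} with indices $(-m,-s)$ to get the rate $\varepsilon^{m-s}$. The only cosmetic difference is that you extract the two-index rate directly from Corollary~\ref{c:k-reg-omega} rather than re-running the endpoint-plus-interpolation bookkeeping of \eqref{i:estimate-Hminusk-minus1}--\eqref{i:estimate-Hminus-s}, and the admissibility caveat you flag about $w^\varepsilon$ is likewise left implicit in the paper.
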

\begin{proof}
  The proof is identical to the proof of Theorem~\ref{t:estimates-Rd},
  replacing the application of Lemma~\ref{l:reg-Rd} with that of
  Lemma~\ref{l:k-reg-omega} and Corollary~\ref{c:k-reg-omega}.
\end{proof}

The above results are summarized in the following theorem:
  \begin{theorem}[Regularization estimates in
    $\Omega$]\label{t:estimates-omega}
    Let $\omega$ be such that {\eqref{i:interface-assumption}} holds,
    i.e.
    \begin{equation*}
       \dist(\partial\omega,\partial\Omega)> c_0.
    \end{equation*}
    Define the extension of $\omega$ as in {\eqref{e:extension}},
    i.e.,
    \begin{equation*}
      \Oepsilon = \bigcup_{x\in\omega}\Bepsilon(x) \subseteq \Omega,
    \end{equation*}
    and let $\varepsilon \leq \varepsilon_0\leq c_0$.
    For $0\le s \le m \leq k+1$ where $k$ is the order of the moments
    conditions satisfied by $\depsilon$ as in
    Assumption~\ref{a:app-dirac}.2, we have:
    \begin{itemize}
    \item If $v\in H^m(\omega^{\varepsilon_0}) \cap L^1(\Omega)$, then
      \begin{equation*}
        \|v-v^\varepsilon\|_{H^s(\omega)} \preceq  \varepsilon^{m-s}
        \|v\|_{H^m(\omega^{\varepsilon_0})}.
      \end{equation*}
    \item If $F \in H^{-m}(\Omega)$ and $\emph{supp}_{-s}(F) \subseteq
      \omega$, then
      \begin{equation*}
         \|F-\Fepsilon\|_{H^{-m}(\Omega)} \preceq \varepsilon^{m-s} \|F\|_{H^{-s}(\omega)}.
      \end{equation*}
    \end{itemize}
  \end{theorem}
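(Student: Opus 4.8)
The plan is to read Theorem~\ref{t:estimates-omega} as a consolidation of Corollary~\ref{c:k-reg-omega} and Lemma~\ref{l:estimate-Hminus-s-Omega}, so the only real work is bookkeeping: replacing the $\varepsilon$-dependent neighborhood $\Oepsilon$ appearing in those statements by the fixed neighborhood $\omega^{\varepsilon_0}$, and translating between the ``positive'' Sobolev exponents used for functions and the ``negative'' ones used for functionals, together with the $\text{supp}_m$ convention.

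For the first bullet, fix $0\le s\le m\le k+1$ and $v\in H^m(\omega^{\varepsilon_0})\cap L^1(\Omega)$. First I would invoke Corollary~\ref{c:k-reg-omega}: for $\varepsilon<\varepsilon_1$, with $\varepsilon_1$ the threshold there, it gives $\|v-\vepsilon\|_{H^s(\omega)}\preceq\varepsilon^{m-s}\|v\|_{H^m(\Oepsilon)}$, and since $\varepsilon\le\varepsilon_0$ forces $\Oepsilon\subseteq\omega^{\varepsilon_0}$, monotonicity of the Sobolev norm with respect to the domain upgrades the right-hand side to $\varepsilon^{m-s}\|v\|_{H^m(\omega^{\varepsilon_0})}$, which is the claim. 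The residual range $\varepsilon_1\le\varepsilon\le\varepsilon_0$, where $\varepsilon$ is bounded below by a fixed constant, I would handle separately: because $\varepsilon\le\varepsilon_0\le c_0<\dist(\partial\omega,\partial\Omega)$ we still have $\Bepsilon(x)\subset\Omega$ for every $x\in\omega$, so the regularization operator is bounded from $H^s(\omega^{\varepsilon_0})$ into $H^s(\omega)$ by exactly the stability arguments already used inside the proofs of Lemma~\ref{l:k-reg-omega} and Corollary~\ref{c:k-reg-omega} (the identity $D^\beta\vepsilon=(D^\beta v)^\varepsilon$ on $\omega$ followed by $L^2$-stability); hence $\|v-\vepsilon\|_{H^s(\omega)}\preceq\|v\|_{H^m(\omega^{\varepsilon_0})}$, and since $1\le(\varepsilon/\varepsilon_1)^{m-s}$ on this range the factor $\varepsilon^{m-s}$ can be inserted at the cost of a constant depending only on $\varepsilon_1,m,s$.

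For the second bullet I would simply relabel indices in Lemma~\ref{l:estimate-Hminus-s-Omega}: apply that lemma with its exponent $s$ taken to be $-m$ and its exponent $m$ taken to be $-s$. Then the admissibility constraints $-k-1\le -m\le -s\le 0$ are equivalent to $0\le s\le m\le k+1$, the hypothesis ``$\text{supp}_{-s}(F)\subseteq\omega$'' is precisely the $\text{supp}$-condition of the lemma in the relabeled exponents, and its conclusion reads $\|F-\Fepsilon\|_{H^{-m}(\Omega)}\preceq\varepsilon^{(-s)-(-m)}\|F\|_{H^{-s}(\omega)}=\varepsilon^{m-s}\|F\|_{H^{-s}(\omega)}$, as desired.

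I do not expect a genuine obstacle here: all the analytic content — the Taylor expansion against the moment conditions, Young's inequality combined with the $L^1$ growth control of Lemma~\ref{l:L1-control}, the interpolation steps, and the density reductions to $C_c^\infty$ — was already carried out in Lemmas~\ref{l:reg-Rd} and \ref{l:k-reg-omega}, Corollary~\ref{c:k-reg-omega}, and Lemma~\ref{l:estimate-Hminus-s-Omega}. The only point that requires minor care, and thus the ``main obstacle'' such as it is, is ensuring the hidden constant stays bounded as $\varepsilon\to\varepsilon_0$, i.e.\ when $\varepsilon^{m-s}$ no longer provides any smallness; this is exactly what the stability fallback above handles for the function estimate, and it is already built into Lemma~\ref{l:estimate-Hminus-s-Omega} for the functional estimate.
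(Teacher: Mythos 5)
Your proposal is correct and matches the paper's intent exactly: the paper offers no separate proof, stating only that Theorem~\ref{t:estimates-omega} summarizes Corollary~\ref{c:k-reg-omega} and Lemma~\ref{l:estimate-Hminus-s-Omega}, which is precisely the consolidation (domain monotonicity plus the index relabeling $s\mapsto -m$, $m\mapsto -s$) you carry out. Your extra care with the residual range $\varepsilon_1\le\varepsilon\le\varepsilon_0$, where the stated threshold of the corollary and that of the theorem do not literally coincide, is a detail the paper glosses over and is handled correctly.
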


\section{Model problem}\label{ss:model-problem}
We are now in a position to apply the results of
Theorem~\ref{t:estimates-omega} to a model
elliptic problem.  Let $A(x)$ be a symmetric $d\times d$ matrix. We
assume that all entries of $A(x)$ are in $C^1(\overline\Omega)$,
uniformly bounded, and that $A(x)$ is positive definite, \ie there
exist positive constants $a_0,a_1$ satisfying
\[
	a_0|\xi|^2\le \xi^{\Tr} A(x) \xi \le a_1|\xi|^2, \quad \forall \xi\in\Rd \text{ and } x\in \overline\Omega .
\]
We also set $c(x)$ to be a nonnegative function in $C^{0,1}(\overline\Omega)$. 
We now define the forcing data for our model problem.
We set $F\in H^{-1}(\Omega)$. We assume that for some $s\in(0,\tfrac12]$, $\text{supp}_{s-1}(F)\subset\omega$.

Based on the above definitions, our model problem reads: find the distribution $u$ satisfying
\begin{equation}\label{e:distributional}
	\begin{aligned}
		-\DIV(A(x)\GRAD u) + c(x)u = F, \quad &\text{ in } \Omega, \\
		u = 0, \quad &\text{ on } \partial\Omega .
	\end{aligned}
\end{equation}
To approximate Problem \eqref{e:distributional} using the finite element method, we shall consider its variational formulation: find $u\in V:=\Hunz$ satisfying
\begin{equation}\label{e:variational}
	\calA(u,v) = \langle F, v \rangle_{V', V}, \quad \forall v\in V,
\end{equation}
where
\[
	\calA(v,w) = \int_{\Omega} \GRAD v^{\Tr} A(x) \GRAD w + c(x) vw\diff x, \quad\text{ for } v,w\in V. 
\]

\subsubsection*{Regularity}
Our error estimates rely on standard regularity results for elliptic problems: given $g\in V'$, let $T :  V' \to V$ be the solution operator satisfying
\begin{equation}\label{e:general-variational}
	\calA(Tg,v)  = \langle g,v\rangle_{V',V}, \qquad \forall v\in V .
\end{equation}
We first note that if $g\in L^2(\Omega)$, we identify $\langle g,\cdot\rangle_{V',V}$ with $(g,\cdot)_\Omega$ and hence $Tg$ has the $H^2$ interior regularity, \ie given a subset $K$ such that $\overline K\subset\Omega$, $Tg\in H^2(K)$ and
\begin{equation}\label{i:interior-regularity}
	\|Tg\|_{H^2(K)} \preceq \|g\|_{L^2(\Omega)},
\end{equation}
where the hiding constant depends on $K$ and $\Omega$; we refer to \cite[Theorem 1 of Section 6.1]{MR2597943} and \cite[Theorem 5.33]{MR2895178} for a standard  proof. The following assumption provides the regularity of $Tg$ up to the boundary 
\begin{assumption}[elliptic regularity]
\label{a:elliptic-regularity}
There exists $r\in(0,1]$ and a positive constant $C_r$ satisfying
\[
	\|Tg\|_{H^{1+r}(\Omega)}\leq C_r \|g\|_{H^{-1+r}(\Omega)} .
\]
\end{assumption}
As an example, consider the case where $\Omega$ is a polytope, $A(x)$ is the identity matrix, and $c(x)=0$, \ie $\calA$ becomes the Dirichlet form 
\begin{equation}\label{e:dirichlet}
	\calA(v,w)=\int_\Omega \GRAD v^{\Tr}\GRAD w\diff x, \quad\forall v,w\in V.
\end{equation}
Based the regularity results provided by \cite{MR961439}, $r$ in Assumption~\ref{a:elliptic-regularity} is between $\tfrac12$ and 1 and can be decided by the shape of $\Omega$. Assumption~\ref{a:elliptic-regularity} also implies that the solution $u$ in \eqref{e:variational} belongs to $H^{1+\min\{s,r\}}(\Omega)\cap\Hunz$.
 
\subsection{Analysis of a regularized problem}
Now we are ready to define a regularized problem of \eqref{e:variational}: find $\uepsilon\in V$ satisfying
\begin{equation}\label{e:regularized}
	\calA(\uepsilon, v) = \langle \Fepsilon, v\rangle_{V', V}, \qquad \forall v\in V .
\end{equation}
\begin{remark}
  Notice that we denote with $\uepsilon$ the solution to
  Problem~\ref{e:regularized}, and in this case the superscript
  $\varepsilon$ does not denote a regularization (hence the different
  font used for $\uepsilon$).
\end{remark}
In order to bound the error between $u$ and $\uepsilon$, we use the boundedness and coercivity of $\calA(\cdot,\cdot)$, and obtain that
\[
\begin{aligned}
	\|u-\uepsilon\|_{H^1(\Omega)}^2 &\preceq \calA(u-\uepsilon,u-\uepsilon) \\
	&\preceq \langle F-\Fepsilon,u-\uepsilon\rangle_{V',V} \\
	&\le \|F-\Fepsilon\|_{H^{-1}(\Omega)}\|u-\uepsilon\|_{H^1(\Omega)} .
\end{aligned}
\]
This implies that
\[
	\|u-\uepsilon\|_{H^1(\Omega)}\preceq \|F-\Fepsilon\|_{H^{-1}(\Omega)} .
\]
Applying Lemma~\ref{l:estimate-Hminus-s-Omega} yields
\begin{theorem}[$H^1(\Omega)$ error estimate]\label{t:h1-reg}
Under the assumptions in Proposition~\ref{l:estimate-Hminus-s-Omega}, let $u$ and $\uepsilon$ be the solutions of problem \eqref{e:variational} and \eqref{e:regularized}, respectively. Then, there holds
\[
	\|u-\uepsilon\|_{H^1(\Omega)} \preceq \varepsilon^s \|F\|_{H^{s-1}(\omega)} .
\]
\end{theorem}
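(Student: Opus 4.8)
The statement is in fact an immediate corollary of the energy argument already displayed before the theorem together with Lemma~\ref{l:estimate-Hminus-s-Omega}, so the plan is essentially bookkeeping: reduce the energy-norm error to the negative-norm consistency error $\|F-\Fepsilon\|_{H^{-1}(\Omega)}$, and then quote the regularization estimate in $\Omega$.

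First I would record the well-posedness and stability of the two problems. The bilinear form $\calA(\cdot,\cdot)$ is bounded and coercive on $V=\Hunz$ (coercivity with a constant depending on $a_0$ and the Poincar\'e constant, using $c\ge 0$), so \eqref{e:variational} and \eqref{e:regularized} each have a unique solution, provided $\Fepsilon$ is a well-defined element of $V'=H^{-1}(\Omega)$. This last point is the only place where a small verification is needed: by Definition~\ref{d:regularization}, $\langle \Fepsilon,v\rangle=\langle F,\vepsilon\rangle$, and since $\mathrm{supp}_{s-1}(F)\subseteq\omega$ one can pair $F\in H^{s-1}(\omega)$ against $\vepsilon\in H^{1-s}(\omega)$; Corollary~\ref{c:k-reg-omega} (applied with moment order $m=1\le k+1$ and target smoothness $1-s\in[\tfrac12,1)$) gives $\|\vepsilon\|_{H^{1-s}(\omega)}\preceq\|v\|_{H^1(\omega^{\varepsilon_0})}\le\|v\|_{H^1(\Omega)}$, so $v\mapsto\langle \Fepsilon,v\rangle$ is bounded on $V$, as required. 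I would also fix $\varepsilon\le\varepsilon_0\le c_0$ (and, if needed, below the threshold $\varepsilon_1$ of Corollary~\ref{c:k-reg-omega}) so that all the interior regularization estimates are available.

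Next I would subtract \eqref{e:regularized} from \eqref{e:variational} to obtain the error equation $\calA(u-\uepsilon,v)=\langle F-\Fepsilon,v\rangle_{V',V}$ for all $v\in V$. Testing with $v=u-\uepsilon$, using coercivity on the left and the definition of the dual norm on the right, gives $\|u-\uepsilon\|_{H^1(\Omega)}^2\preceq\calA(u-\uepsilon,u-\uepsilon)\le\|F-\Fepsilon\|_{H^{-1}(\Omega)}\,\|u-\uepsilon\|_{H^1(\Omega)}$, and after dividing by $\|u-\uepsilon\|_{H^1(\Omega)}$ (trivial if it vanishes) one is left with $\|u-\uepsilon\|_{H^1(\Omega)}\preceq\|F-\Fepsilon\|_{H^{-1}(\Omega)}$.

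Finally I would invoke Lemma~\ref{l:estimate-Hminus-s-Omega} with its parameter ``$s$'' taken to be $-1$ and its parameter ``$m$'' taken to be $s-1$, where $s\in(0,\tfrac12]$ is the exponent in the hypothesis $\mathrm{supp}_{s-1}(F)\subseteq\omega$. The admissibility conditions $-k-1\le -1\le s-1\le 0$ hold because $k\ge 0$ and $s\le 1$, so the lemma yields $\|F-\Fepsilon\|_{H^{-1}(\Omega)}\preceq\varepsilon^{(s-1)-(-1)}\|F\|_{H^{s-1}(\omega)}=\varepsilon^{s}\|F\|_{H^{s-1}(\omega)}$. Combining this with the previous display completes the proof. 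The only genuinely delicate point in the whole argument is the well-definedness of $\Fepsilon$ in $H^{-1}(\Omega)$ sketched above; everything else is a routine chain of coercivity, duality, and an application of an already-proved lemma.
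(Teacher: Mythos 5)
Your proof is correct and follows essentially the same route as the paper: coercivity of $\calA$ applied to the error equation tested with $u-\uepsilon$ reduces everything to $\|F-\Fepsilon\|_{H^{-1}(\Omega)}$, which is then bounded by Lemma~\ref{l:estimate-Hminus-s-Omega} with its parameters set to $-1$ and $s-1$, giving the rate $\varepsilon^{s}$. Your additional verification that $\Fepsilon$ is a well-defined element of $H^{-1}(\Omega)$ via Corollary~\ref{c:k-reg-omega} is a reasonable piece of care that the paper leaves implicit, but it does not change the argument.
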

\subsection{$L^2(\Omega)$ error estimate}
We next show the convergence rate for $\uepsilon$ in $L^2(\Omega)$
norm. To this end, we additionally assume that $\depsilon$ satisfies
the moments conditions in Assumption~\ref{a:app-dirac}.2 with $k\geq1$.

\begin{theorem}[$L^2(\Omega)$ error estimate]\label{t:reg-estimate}
  Under the assumptions of Lemma~\ref{l:estimate-Hminus-s-Omega}, and
  Assumption~\ref{a:app-dirac}.2 with $k\geq1$, let $u$ and
  $\uepsilon$ be the solutions of problems \eqref{e:variational} and
  \eqref{e:regularized}, respectively. For a fixed $\varepsilon_0<c_0$ and 
  $\varepsilon< \varepsilon_0/2$, there holds
\[
	\|u-\uepsilon\|_{L^2(\Omega)} \preceq \varepsilon^{s+1}\|F\|_{H^{s-1}(\omega)} .
\]
\end{theorem}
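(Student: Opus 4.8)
The plan is to run an Aubin--Nitsche (duality) argument, exactly as for an $L^2$ finite element estimate, built on top of the $H^1(\Omega)$ bound of Theorem~\ref{t:h1-reg} and the $H^2$ interior regularity \eqref{i:interior-regularity} of the dual problem; the extra power of $\varepsilon$ relative to the energy estimate is precisely what the second order moments condition ($k\geq 1$) provides.

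First, set $e := u - \uepsilon \in V$ and let $z := Te \in V$ be the solution of \eqref{e:general-variational} with datum $g = e \in L^2(\Omega) \subset V'$, so that, by symmetry of $\calA$, $\calA(v,z) = (v,e)_\Omega$ for every $v \in V$. Testing this identity with $v = e$ and using that $u$ and $\uepsilon$ solve \eqref{e:variational} and \eqref{e:regularized} with test function $z$, one obtains
\[
  \|e\|_{L^2(\Omega)}^2 = \calA(e,z) = \calA(u,z)-\calA(\uepsilon,z) = \langle F - \Fepsilon, z\rangle_{V',V} = \langle F, z - z^\varepsilon\rangle ,
\]
where the last equality is the definition of $\Fepsilon$ in Definition~\ref{d:regularization}. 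Hence the estimate reduces to bounding $\langle F, z - z^\varepsilon\rangle$.

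Since $\text{supp}_{s-1}(F)\subseteq\omega$, the pairing localizes to $\omega$ and, arguing as in the proof of Lemma~\ref{l:estimate-Hminus-s-Omega}, one bounds $\langle F, z - z^\varepsilon\rangle \preceq \|F\|_{H^{s-1}(\omega)}\,\|z - z^\varepsilon\|_{H^{1-s}(\omega)}$. Because $e\in L^2(\Omega)$ and $\overline{\omega^{\varepsilon_0}}\subset\Omega$ — this is where the hypothesis $\varepsilon_0 < c_0 < \dist(\partial\omega,\partial\Omega)$ enters — the interior regularity \eqref{i:interior-regularity} gives $\|z\|_{H^2(\omega^{\varepsilon_0})}\preceq\|e\|_{L^2(\Omega)}$. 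Since $k\geq 1$ we may invoke Corollary~\ref{c:k-reg-omega} with $m = 2$ and Sobolev index $1-s\in[\tfrac12,1)\subset[0,2]$ (for $\varepsilon$ small enough, possibly after shrinking $\varepsilon_0$), which yields
\[
  \|z - z^\varepsilon\|_{H^{1-s}(\omega)}\preceq \varepsilon^{2-(1-s)}\|z\|_{H^2(\omega^{\varepsilon_0})} = \varepsilon^{s+1}\|z\|_{H^2(\omega^{\varepsilon_0})} \preceq \varepsilon^{s+1}\|e\|_{L^2(\Omega)} .
\]
Chaining the last two displays gives $\|e\|_{L^2(\Omega)}^2\preceq \varepsilon^{s+1}\|F\|_{H^{s-1}(\omega)}\,\|e\|_{L^2(\Omega)}$, and dividing through by $\|e\|_{L^2(\Omega)}$ (the case $e=0$ being trivial) proves the claim.

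The only step that is not routine bookkeeping of Sobolev exponents is the localization bound $\langle F, z - z^\varepsilon\rangle\preceq\|F\|_{H^{s-1}(\omega)}\|z - z^\varepsilon\|_{H^{1-s}(\omega)}$: one must check that $z - z^\varepsilon$ is an admissible argument for $F$ and that the resulting action depends only on its restriction to $\omega$. This holds because for $x\in\omega$ and $\varepsilon<c_0$ the kernel $\depsilon(x-\cdot)$ is supported in $\omega^\varepsilon$, which is compactly contained in $\Omega$, so $z^\varepsilon|_\omega$ depends only on $z|_{\omega^\varepsilon}$, where $z$ enjoys full $H^2$ regularity — the same mechanism already used in Lemma~\ref{l:k-reg-omega} and Corollary~\ref{c:k-reg-omega}, so that argument transfers essentially verbatim.
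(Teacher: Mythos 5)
Your proposal is correct and follows essentially the same route as the paper: a duality (Aubin--Nitsche) argument with the dual solution $z$, the identity $\|u-\uepsilon\|_{L^2(\Omega)}^2=\langle F, z-z^\varepsilon\rangle$, localization via $\mathrm{supp}_{s-1}(F)\subseteq\omega$, the interior $H^2$ regularity bound $\|z\|_{H^2(\omega^{\varepsilon_0})}\preceq\|u-\uepsilon\|_{L^2(\Omega)}$, and the regularization estimate in $H^{1-s}(\omega)$ giving the factor $\varepsilon^{s+1}$. If anything, your citation of Corollary~\ref{c:k-reg-omega} for the bound $\|z-z^\varepsilon\|_{H^{1-s}(\omega)}\preceq\varepsilon^{1+s}\|z\|_{H^2(\omega^{\varepsilon_0})}$ is the more precise reference, since the paper nominally invokes Lemma~\ref{l:k-reg-omega} there.
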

\begin{proof}
We consider the following dual problem: find $z\in V$ such that
\[
	\calA(v, z) = (u-\uepsilon, v)_\Omega, \qquad\forall v\in V .
\]
Hence, we choose $v=u-\uepsilon$ and obtain that
\begin{equation}\label{e:reg-estimate-1}
\begin{aligned}
	\|u-\uepsilon\|_{L^2(\Omega)}^2 &=\calA(z, u-\uepsilon) \\
	&= \langle F-\Fepsilon, z\rangle_{V',V} =  \langle F, z-\zepsilon \rangle_{H^{-1}(\Omega),H^1(\Omega)}.
\end{aligned}
\end{equation}
Due to the interior regularity of $z$, $u-\uepsilon\in \Hunz\subset L^2(\Omega)$ implies that 
\[
	{\|z\|_{H^{2}(\omega^{\varepsilon_0})}}\preceq \|u-\uepsilon\|_{L^2(\Omega)} .
\] 
We continue to estimate the right hand side of \eqref{e:reg-estimate-1} by
\begin{equation}\label{i:reg-estimate-2}
\begin{aligned}
	  \langle F, z-\zepsilon \rangle_{H^{-1}(\Omega),H^1(\Omega)} 
	 & \preceq \|F\|_{H^{s-1}(\omega)} \|z-\zepsilon\|_{H^{1-s}(\omega)} \\
	& \preceq \varepsilon^{s+1}\|F\|_{H^{s-1}(\omega)} \|z\|_{H^{2}(\omega^{\varepsilon_0})} \\
	& \preceq  \varepsilon^{s+1}\|F\|_{H^{s-1}(\omega)} 
	\|u-\uepsilon\|_{L^2(\Omega)},
\end{aligned}
\end{equation}
where in the second inequality, we invoke Lemma~\ref{l:k-reg-omega} for $z$. Combing \eqref{e:reg-estimate-1} and \eqref{i:reg-estimate-2} concludes the proof of the theorem.
\end{proof}

\begin{remark}\label{r:bc}
  We have to point out that the estimates in Theorem~\ref{t:h1-reg}
  and \ref{t:reg-estimate} also hold for Problem
  \eqref{e:distributional} with other boundary conditions. These error
  estimates depend on the smoothness of the test function and of the
  solution for the dual problem in the neighborhood of $\omega$ while
  $\omega$ is away from the boundary. The analysis of the case where
  $\omega$ is attached to the boundary $\partial\Omega$ is left aside
  for future investigation.
\end{remark}

\section{Application to immersed methods}\label{s:ib}
The general results presented in the previous section can be applied
immediately to immersed interface and {immersed} boundary
methods~\cite{Heltai-2008-a, HeltaiRotundo-2019-a, 
HeltaiCaiazzo-2018-a, AlzettaHeltai-2020-a}. In this
section, we consider an interface problem whose variational
formulation can be written as in the model problem
\eqref{e:variational}, and we shall consider its finite element
approximation using the regularization of the forcing data given by
the application of Definition~\ref{d:regularization} to functions in
negative Sobolev spaces. 
\subsection{An interface problem via immersed methods}
Let $\Gamma\subset\Omega$ be a closed Lipschitz interface with
co-dimension one. We set $\omega$ be the domain inside $\Gamma$, and
we assume that $\omega$ satisfies the assumptions introduced in
Section~\ref{s:prelim} (see Figure~\ref{fig:domain}).
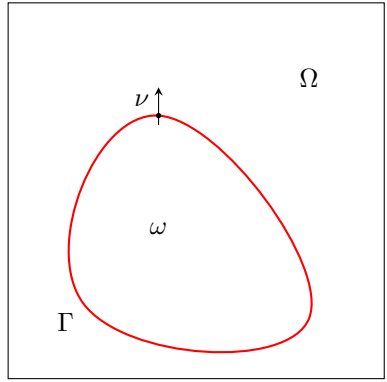
\begin{figure}[h]
  \tikzsetnextfilename{domain}
  \centering
  \begin{tikzpicture}
    
    \draw (0,0) rectangle (5,5) node[anchor=south west](O) {};
    
    \node at (1,1) (G){};
    \node at (2,3.5) (N){};
    \node at (2,4) (Np){};
    
    \draw [red, thick] plot [smooth cycle, tension=1]
    coordinates { (G)  (4,.8)  (N)};
    
    \node at (4,4) {$\Omega$};
    \node at (2,2) {$\omega$};
    \node [below left] at (G) {$\Gamma$};
    \node [above left] at (N) {$\nu$};
    
    \draw [-stealth] (N.south) -- (Np);
    \fill (N) circle (1pt);
    
  \end{tikzpicture}
  \caption{Domain representation}
  \label{fig:domain}
\end{figure}

Given $f\in H^{s-1/2}(\Gamma)$ with $s\in[0,\tfrac12]$, we consider the following Poisson problem
\begin{equation}\label{e:strong}
	\begin{aligned}
		-\Delta u = 0, \quad &\text{ in } \Omega\backslash\Gamma, \\
		\llbracket u \rrbracket = 0, \quad &\text{ on } \Gamma,\\
		\bigg\llbracket \frac{\partial u}{\partial \nu} \bigg\rrbracket = f, \quad &\text{ on } \Gamma,\\
		u = 0, \quad &\text{ on } \partial\Omega.
	\end{aligned}
\end{equation}
For $x\in\Gamma$, $\nu(x)$ denotes the normal vector and $\tfrac{\partial u}{\partial \nu}$ denotes the corresponding normal derivative. We also use the notation $\llbracket\cdot\rrbracket$ for the jump across $\Gamma$. More precisely speaking, letting $u^-=u\big|_{\omega}$ and $u^+=u\big|_{\Omega\backslash\overline\omega}$, the jump across $\Gamma$ is defined by
\[
	\llbracket u \rrbracket = u^+\big|_{\Gamma} - u^-\big|_{\Gamma} .
\]
The above problem can be reformulated in the entire domain $\Omega$ using a singular forcing term that induces the correct jump on the gradient of the solution. (cf. \cite{MR2441884}). The variational fomulation of the new problem is provided by \eqref{e:variational} where $\calA$ is the Dirichlet form \eqref{e:dirichlet} and 
\begin{equation}\label{e:dirac}
  F = \cM f := \int_{\Gamma} \delta(x-y) f(y) \diff\sigma_y .
\end{equation}
Here $\delta$ denotes the $d$-dimensional Dirac delta
distribution, and Eq.~\eqref{e:dirac} should be interpreted variationally,
i.e.,
\begin{equation}\label{e:variational-forcing}
    \langle F, v \rangle_{V', V} := \langle  \cM f, v\rangle_{V', V} 
    := \int_{\Gamma} fv \diff\sigma, \quad \forall v\in V.
\end{equation}
We note that for any $s$ in $[0,\tfrac12)$, $\cM$ is a bounded map from $H^{s-1/2}(\Gamma)$ to $H^{s-1}(\Oee)\cap H^{-1}(\Omega)$ and hence the variational formulation makes sense. 
In fact, for $v\in H^1(\Omega)$,
\[
\begin{aligned}
	\bigg| \int_\Gamma f(y)v(y)\diff\sigma_y \bigg| 
	&\le \|f\|_{H^{s-1/2}(\Gamma)}\|v\|_{H^{1/2-s}(\Gamma)} \\
	&\le \|f\|_{H^{s-1/2}(\Gamma)}\|v\|_{H^{1-s}(\omega)} \le \|f\|_{H^{s-1/2}(\Gamma)}\|v\|_{H^{1}(\Omega)} .
\end{aligned}
\]
Here we apply the trace Theorem (e.g. \cite[Theorem 1.5.1.2]{MR1742312}) for the second inequality. The above estimate together with \eqref{e:variational-forcing} shows that $F\in V'$. It also implies that $F\in H^{s-1}(\Oee)$ and hence $\text{supp}(F)_{s-1}\subseteq \omega$. So $F$ satisfies the setting in Section~\ref{ss:model-problem} and we can apply the results from Theorem~\ref{t:h1-reg} and \ref{t:reg-estimate} to the interface problem to get
\begin{equation}\label{i:error-h1-l2}
	\|u-\uepsilon\|_{L^2(\Omega)} + \varepsilon \|u-\uepsilon\|_{H^1(\Omega)} \preceq \varepsilon^{s+1}\|f\|_{H^{s-1/2}(\Gamma)} .
\end{equation}

We note that Fubini's Theorem yields that
\[
\begin{aligned}
	\langle F, \vepsilon \rangle_{H^{-1}(\Omega), H^1(\Omega)} &=\int_\Gamma f(x) \int_\Omega v(y) \depsilon(x-y)\diff x \diff\sigma_y \\
	& = \int_\Omega v(y) \int_\Gamma f(x)  \depsilon(x-y) \diff
        \sigma_y \diff x 
\end{aligned}
\]
and hence
\begin{equation}
  \label{e:classical-Fepsilon}
  \begin{split}
    \Fepsilon(x) &=  \int_\Gamma f(y)  \depsilon(y-x) \diff y\\
    \Big(&= \int_\Gamma f(y)  \depsilon(x-y) \diff y \qquad \text{ if } k
    \geq 1\Big)
  \end{split}
\end{equation}
which is the classical formulation of $\Fepsilon$ that can be found in
the literature of the Immersed Boundary Method~\cite{Peskin-2002-a},
where $\depsilon$ is always taken to be even (i.e., $k\geq1$), and
$\Fepsilon$ is introduced as the regularization of $f$ on $\Gamma$,
via the $d$-dimensional Dirac approximation $\depsilon$.

\subsection{Finite Element Approximation of the regularized problem}\label{s:fem}
In this section, we consider a finite element approximation of the
regularized problem \eqref{e:regularized}. Assume that $\Omega$
is polytope and let $\{\mathcal T_h(\Omega)\}_{h>0}$ be a family of conforming
subdivisions of $\overline\Omega$ made of simplices with $h$ denoting
their maximal size. We assume that $\mathcal T_h(\Omega)$ are
shape-regular and quasi-uniform in the sense of
\cite{MR2050138,MR1930132}.

Let $\mathbb V_h$ be the space of continuous piecewise linear functions subordinate to $\mathcal T_h(\Omega)$ that vanish on $\partial\Omega$. Let $I_h : H^1_0(\Omega)\to \mathbb V_h$ be the Scott-Zhang interpolation \cite{MR1011446} which has the following approximation property
\begin{equation}\label{i:scott-zhang}
	\|v-I_h v\|_{H^1(\Omega)}\preceq h^s\|v\|_{H^{1+s}(\Omega)},\quad \text{for } v\in H^{1+s}(\Omega)\cap\Hunz .
\end{equation}

The discrete counterpart of the regularized problem \eqref{e:regularized} reads: find $\ueh\in \mathbb V_h$ satisfying
\begin{equation}\label{e:discrete}
	\calA(\ueh,v_h) = \langle \Fepsilon,v_h\rangle_{V',V},\qquad\forall v_h\in \mathbb V_h .
\end{equation}
For $v_h\in \mathbb V_h$, $\langle\Fepsilon,v_h\rangle_{V',V}$ can be
computed by using a quadrature formula on $\Gamma$ and a quadrature
formula on $\tau\in\mathcal T_h(\Omega)$. We refer to the next Section
for the details of the implementation.

The following theorem shows the error between $u$ and its final approximation $\ueh$.
\begin{theorem}[$H^1(\Omega)$ error estimate]\label{t:discrete-h1-estimate}
Let $u$ and $\ueh$ be the solutions to \eqref{e:variational} and \eqref{e:discrete}, respectively. Under Assumption~\ref{a:elliptic-regularity} and \ref{a:app-dirac}, we have
\[
	\|u-\ueh\|_{H^1(\Omega)}\preceq (h^{\min\{s,r\}}+\varepsilon^{s})\|f\|_{H^{s-1/2}(\Gamma)} .
\]
\end{theorem}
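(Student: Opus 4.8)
The plan is to split the total error with the triangle inequality into a regularization (consistency) part and a finite element part, and to carry out the finite element estimate via Céa's lemma using the \emph{exact} solution $u$ as the function being approximated, rather than $\uepsilon$ (whose $H^{1+t}$-norms degenerate like a negative power of $\varepsilon$). Write $\|u-\ueh\|_{H^1(\Omega)}\le\|u-\uepsilon\|_{H^1(\Omega)}+\|\uepsilon-\ueh\|_{H^1(\Omega)}$. The first term is controlled by Theorem~\ref{t:h1-reg}: since $F=\cM f$ and $\|F\|_{H^{s-1}(\omega)}\preceq\|f\|_{H^{s-1/2}(\Gamma)}$ (established above via the trace theorem), one has $\|u-\uepsilon\|_{H^1(\Omega)}\preceq\varepsilon^{s}\|f\|_{H^{s-1/2}(\Gamma)}$, provided $\varepsilon$ is small enough as required there.

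For the second term, testing \eqref{e:regularized} and \eqref{e:discrete} against $v_h\in\mathbb V_h$ and subtracting yields the Galerkin orthogonality $\calA(\uepsilon-\ueh,v_h)=0$ for all $v_h\in\mathbb V_h$, so $\ueh$ is the $\calA$-projection of $\uepsilon$ onto $\mathbb V_h$; by boundedness and coercivity of $\calA$ (Céa's lemma), and since $u\in\Hunz$ makes $I_h u$ admissible,
\[
  \|\uepsilon-\ueh\|_{H^1(\Omega)}\preceq\inf_{v_h\in\mathbb V_h}\|\uepsilon-v_h\|_{H^1(\Omega)}\le\|\uepsilon-I_h u\|_{H^1(\Omega)}\le\|u-\uepsilon\|_{H^1(\Omega)}+\|u-I_h u\|_{H^1(\Omega)}.
\]
The first summand on the right is again $\preceq\varepsilon^{s}\|f\|_{H^{s-1/2}(\Gamma)}$; for the second I would use the Scott--Zhang estimate \eqref{i:scott-zhang}, which gives $\|u-I_h u\|_{H^1(\Omega)}\preceq h^{\min\{s,r\}}\|u\|_{H^{1+\min\{s,r\}}(\Omega)}$.

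The remaining — and genuinely substantive — ingredient is the $\varepsilon$-independent a~priori bound $\|u\|_{H^{1+\min\{s,r\}}(\Omega)}\preceq\|f\|_{H^{s-1/2}(\Gamma)}$. This is the regularity of $u$ announced right after Assumption~\ref{a:elliptic-regularity}: because $\text{supp}_{s-1}(F)\subseteq\omega$ one has $F\in H^{s-1}(\Omega)$ with $\|F\|_{H^{s-1}(\Omega)}\preceq\|F\|_{H^{s-1}(\omega)}\preceq\|f\|_{H^{s-1/2}(\Gamma)}$; if $s\ge r$ then $H^{s-1}(\Omega)\hookrightarrow H^{r-1}(\Omega)$ and Assumption~\ref{a:elliptic-regularity} applied to $u=TF$ gives the bound with $\min\{s,r\}=r$, while if $s<r$ one interpolates the elementary estimate $\|TF\|_{H^1(\Omega)}\preceq\|F\|_{H^{-1}(\Omega)}$ with Assumption~\ref{a:elliptic-regularity} to obtain $\|TF\|_{H^{1+s}(\Omega)}\preceq\|F\|_{H^{s-1}(\Omega)}$. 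Collecting the three contributions yields $\|u-\ueh\|_{H^1(\Omega)}\preceq(\varepsilon^{s}+h^{\min\{s,r\}})\|f\|_{H^{s-1/2}(\Gamma)}$, as claimed. The only place where care is needed is precisely this regularity step — making sure that no constant depending on $\varepsilon$ (or on $h$) enters — which is exactly why the interpolation inside Céa's lemma is performed through $u$ rather than through $\uepsilon$.
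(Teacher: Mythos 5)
Your proposal is correct and is essentially the paper's argument in a different wrapper: the paper invokes the first Strang lemma, bounding $\|u-\ueh\|_{H^1(\Omega)}$ by $\inf_{v_h}\|u-v_h\|_{H^1(\Omega)}$ plus the consistency term $\sup_{w_h}\langle F-\Fepsilon,w_h\rangle/\|w_h\|_{H^1(\Omega)}$, then uses the Scott--Zhang estimate \eqref{i:scott-zhang} with $v_h=I_hu$, the regularity $u\in H^{1+\min\{s,r\}}(\Omega)$, and Lemma~\ref{l:estimate-Hminus-s-Omega}, exactly the three ingredients you assemble. Your triangle inequality through $\uepsilon$ combined with Galerkin orthogonality and C\'ea's lemma (with the sensible choice of $I_hu$ rather than $I_h\uepsilon$ in the infimum, so that no $\varepsilon$-dependent regularity is needed) is just the standard proof of that Strang-type bound when only the right-hand side is perturbed, with the consistency estimate entering via Theorem~\ref{t:h1-reg} instead of directly via Lemma~\ref{l:estimate-Hminus-s-Omega}; your regularity step $\|u\|_{H^{1+\min\{s,r\}}(\Omega)}\preceq\|f\|_{H^{s-1/2}(\Gamma)}$ spells out what the paper asserts after Assumption~\ref{a:elliptic-regularity}.
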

\begin{proof}
The coercivity of $\calA(\cdot,\cdot)$ implies that
$\calA(\cdot,\cdot)$ is also $\mathbb V_h$ elliptic. The first Strang's Lemma (see, e.g. \cite[Theorem 4.1.1]{MR1930132}) yields
\[
	\|u-\ueh\|_{H^1(\Omega)}\preceq \inf_{v_h\in \mathbb V_h}\|u-v_h\|_{H^1(\Omega)} 
	+\sup_{w_h\in \mathbb V_h}\frac{\langle F-\Fepsilon,w_h\rangle_{H^{-1}(\Omega),H^1(\Omega)}}{\|w_h\|_{H^1(\Omega)}}
\]
Setting $v_h=I_h u$ and invoking \eqref{i:scott-zhang} together with Assumption~\ref{a:elliptic-regularity} and Lemma~\ref{l:estimate-Hminus-s-Omega}, we conclude that
\[
\begin{aligned}
	\|u-\ueh\|_{H^1(\Omega)}&\preceq \|u-I_h u\|_{H^1(\Omega)} + \varepsilon^{s}\|f\|_{H^{s-1/2}(\Gamma)} \\
	&\preceq h^{\min\{s,r\}}\|u\|_{H^{1+\min\{s,r\}}(\Omega)}+\varepsilon^{s}\|f\|_{H^{s-1/2}(\Gamma)} \\
	& \preceq (h^{\min\{s,r\}}+\varepsilon^{s})\|f\|_{H^{s-1/2}(\Gamma)} .
\end{aligned}
\]
\end{proof}
We next show a $L^2(\Omega)$ error estimate between $u$ and $\ueh$.
\begin{theorem}[$L^2(\Omega)$ error estimate]\label{t:discrete-l2-estimate}
Following the settings from Theorem~\ref{t:discrete-h1-estimate}, we
additionally assume that $\depsilon$ satisfies
Assumption~\ref{a:app-dirac}.2 with $k\geq1$. Then we have
\[
	\|u-\ueh\|_{L^2(\Omega)}\preceq (h^{r+\min\{s,r\}}+h^r\varepsilon^s+\varepsilon^{1+s})\|f\|_{H^{s-1/2}(\Gamma)} .
\]
\begin{proof}
We first provide a bound on the error between $\uepsilon$ and $\ueh$ under the regularity assumption of $f$. In fact, the triangle inequality together with Theorem~\ref{t:reg-estimate} and \ref{t:discrete-h1-estimate} implies that
\[
\begin{aligned}
	\|\uepsilon-\ueh\|_{H^1(\Omega)} &\le \|u-\uepsilon\|_{H^1(\Omega)} + \|u-\ueh\|_{H^1(\Omega)} \\
	&\preceq (h^{\min\{s,r\}}+\varepsilon^{s})\|f\|_{H^{s-1/2}(\Gamma)} .
\end{aligned}
\]
Next we bound $\|\uepsilon-\ueh\|_{L^2(\Omega)}$ using a duality argument. Let $\mathtt Z\in \Hunz$ satisfy that
\[
	\calA(\mathtt Z, v) = (\uepsilon-\ueh, v)_\Omega, \qquad\forall v\in \Hunz .
\]
Hence $\mathtt Z\in H^{1+r}(\Omega)$. Applying Galerkin orthogonality 
\[
	\calA(\uepsilon-\ueh, v_h) = 0,\qquad\forall v_h\in\mathbb V_h,
\]
we obtain that
\[
\begin{aligned}
	\|\uepsilon-&\ueh\|_{L^2(\Omega)}^2 = \calA(\mathtt Z, \uepsilon-\ueh) \\
	& = \calA(\mathtt Z - I_h \mathtt Z, \uepsilon-\ueh) \le \|\mathtt Z - I_h\mathtt Z\|_{H^1(\Omega)} \|\uepsilon-\ueh\|_{H^1(\Omega)} \\
	& \preceq h^r \|\mathtt Z \|_{H^{1+r}(\Omega)}(h^{\min\{s,r\}}+\varepsilon^{s})\|f\|_{H^{s-1/2}(\Gamma)} .
\end{aligned}
\]
This, together with the regularity estimate $\|\mathtt Z\|_{H^{1+r}(\Omega)}\preceq \|\uepsilon-\ueh\|_{L^2(\Omega)}$, shows that
\[
	\|\uepsilon-\ueh\|_{L^2(\Omega)} \preceq (h^{r+\min\{s,r\}}+h^r\varepsilon^{s})\|f\|_{H^{s-1/2}(\Gamma)} .
\]
The triangle inequality $\|u-\ueh\|_{L^2(\Omega)}\le \|u-\uepsilon\|_{L^2(\Omega)}+\|\uepsilon-\ueh\|_{L^2(\Omega)}$ together with the above estimate and the $L^2(\Omega)$ estimate in Theorem~\ref{t:reg-estimate} concludes the proof of the theorem.
\end{proof}
\end{theorem}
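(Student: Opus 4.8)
The plan is to split the total error as $u-\ueh = (u-\uepsilon) + (\uepsilon - \ueh)$ and estimate the two pieces separately in the $L^2(\Omega)$ norm. The first piece is already handled by Theorem~\ref{t:reg-estimate}: since $\cM$ maps $H^{s-1/2}(\Gamma)$ boundedly into $H^{s-1}(\omega)$, one has $\|F\|_{H^{s-1}(\omega)}\preceq\|f\|_{H^{s-1/2}(\Gamma)}$, whence $\|u-\uepsilon\|_{L^2(\Omega)}\preceq\varepsilon^{1+s}\|f\|_{H^{s-1/2}(\Gamma)}$. All the remaining work therefore consists of bounding $\|\uepsilon-\ueh\|_{L^2(\Omega)}$, which I would do by an Aubin--Nitsche duality argument, and the final estimate follows from one more triangle inequality.

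First I would record two ingredients. First, since $\mathbb V_h\subseteq V$ and both $\uepsilon$ and $\ueh$ are tested against the \emph{same} functional $\langle\Fepsilon,\cdot\rangle$ in problems \eqref{e:regularized} and \eqref{e:discrete}, Galerkin orthogonality holds, $\calA(\uepsilon-\ueh,v_h)=0$ for all $v_h\in\mathbb V_h$. Second, the triangle inequality together with Theorems~\ref{t:reg-estimate} and~\ref{t:discrete-h1-estimate} gives the $H^1$ bound
\[
  \|\uepsilon-\ueh\|_{H^1(\Omega)}\le\|u-\uepsilon\|_{H^1(\Omega)}+\|u-\ueh\|_{H^1(\Omega)}\preceq(h^{\min\{s,r\}}+\varepsilon^{s})\,\|f\|_{H^{s-1/2}(\Gamma)}.
\]
Next I would introduce the dual solution $\mathtt Z\in\Hunz$ defined by $\calA(\mathtt Z,v)=(\uepsilon-\ueh,v)_\Omega$ for all $v\in\Hunz$. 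Because $\uepsilon-\ueh\in\Hunz\subset L^2(\Omega)\hookrightarrow H^{-1+r}(\Omega)$, Assumption~\ref{a:elliptic-regularity} yields $\mathtt Z\in H^{1+r}(\Omega)$ with $\|\mathtt Z\|_{H^{1+r}(\Omega)}\preceq\|\uepsilon-\ueh\|_{L^2(\Omega)}$. Testing with $v=\uepsilon-\ueh$, using Galerkin orthogonality to replace $\mathtt Z$ by $\mathtt Z-I_h\mathtt Z$, and then invoking the boundedness of $\calA$ and the Scott--Zhang estimate \eqref{i:scott-zhang},
\[
  \|\uepsilon-\ueh\|_{L^2(\Omega)}^2=\calA(\mathtt Z-I_h\mathtt Z,\uepsilon-\ueh)\preceq h^{r}\,\|\mathtt Z\|_{H^{1+r}(\Omega)}\,\|\uepsilon-\ueh\|_{H^1(\Omega)}.
\]
Substituting the two ingredients above and cancelling one factor of $\|\uepsilon-\ueh\|_{L^2(\Omega)}$ gives $\|\uepsilon-\ueh\|_{L^2(\Omega)}\preceq(h^{r+\min\{s,r\}}+h^{r}\varepsilon^{s})\|f\|_{H^{s-1/2}(\Gamma)}$, and adding $\|u-\uepsilon\|_{L^2(\Omega)}\preceq\varepsilon^{1+s}\|f\|_{H^{s-1/2}(\Gamma)}$ via the triangle inequality produces the claimed bound.

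I do not expect a genuine obstacle: the argument is a textbook duality estimate layered on top of the regularization bounds already established. The only points needing care are checking that Galerkin orthogonality is legitimately available --- it rests solely on $\mathbb V_h\subseteq V$ and on the two problems sharing the identical right-hand side $\Fepsilon$, not on any consistency term vanishing --- and confirming that the dual datum $\uepsilon-\ueh$ is regular enough, i.e.\ lies in $H^{-1+r}(\Omega)$, for Assumption~\ref{a:elliptic-regularity} to apply, which is immediate from $\uepsilon-\ueh\in L^2(\Omega)$. Tracking the powers of $h$ and $\varepsilon$ through the two estimates is then purely bookkeeping.
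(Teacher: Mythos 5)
Your proposal is correct and follows essentially the same route as the paper: the identical splitting $u-\ueh=(u-\uepsilon)+(\uepsilon-\ueh)$, the $H^1$ bound for $\uepsilon-\ueh$ via the triangle inequality, and the Aubin--Nitsche duality argument with the dual solution $\mathtt Z$, Galerkin orthogonality (available because both problems share the right-hand side $\Fepsilon$), the Scott--Zhang estimate, and the elliptic regularity bound $\|\mathtt Z\|_{H^{1+r}(\Omega)}\preceq\|\uepsilon-\ueh\|_{L^2(\Omega)}$. The bookkeeping of the powers of $h$ and $\varepsilon$ matches the paper's proof exactly.
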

We end this section with some remarks to further explain the error estimates and also for the numerical experiments in the next section.
\begin{remark}[knowing the regularity of the solution]
If the regularity of the solution is known, e.g., $u\in H^{1+\beta}(\Omega)$ for some $\beta\in(0,1]$, we can apply the interpolation estimate \eqref{i:scott-zhang} with $s=\beta$ in Theorem~\ref{t:discrete-h1-estimate} and \ref{t:discrete-l2-estimate} to get
\[
	\|u-\ueh\|_{H^1(\Omega)}\preceq (h^\beta+\varepsilon^{s})\|f\|_{H^{s-1/2}(\Gamma)} 
\]
and
\[
	\|u-\ueh\|_{L^2(\Omega)}\preceq (h^{r+\beta}+h^r\varepsilon^s+\varepsilon^{1+s})\|f\|_{H^{s-1/2}(\Gamma)} .
\]
\end{remark}
\begin{remark}[choices of $\varepsilon$]\label{r:error-estimate}
Usually we can choose $\varepsilon = ch^q$ for some $q\in(0,1]$ and for a fixed factor $c$. Hence, error estimates in the above remark become
\[
	\|u-\ueh\|_{H^1(\Omega)}\preceq h^{\min\{\beta,{sq}\}}, 	\quad
	\|u-\ueh\|_{L^2(\Omega)}\preceq h^{\min\{\beta+r, r+sq, (1+s)q\}} .
\]
\end{remark}

\section{Numerical Illustrations}\label{s:numerical}
We implement the linear system of discrete problem \eqref{e:discrete}
using the \textit{deal.II} finite element Library \cite{MR3893339,
  MaierBardelloniHeltai-2016-a,
  SartoriGiulianiBardelloni-2018-a}. Before validating our error
estimates given by Theorem~\ref{t:discrete-h1-estimate} and
\ref{t:discrete-l2-estimate} via a series of numerical experiments, we
want to make some remarks on the computation of the right hand side
vector in discrete linear system.

\begin{remark}[Approximation of the surface $\Gamma$]\label{r:surface}
  We shall compute the right hand side data on an approximation of a
  $C^2$ interface $\Gamma$ using the technology of the surface finite
  element method for the Laplace-Beltrami problem
  \cite{MR976234,MR2485433}. Let $\Gamma_{h_0}$ be a polytope which
  consists of simplices with co-dimension one, where $h_0$ denotes the
  maximal size of the subdivision. All vertices of these simplices lie
  on $\Gamma$ and similar to $\mathcal T_h(\Omega)$, we assume that
  this subdivision of $\Gamma_{h_0}$, denoted by
  $\mathcal T_{h_0}(\Gamma)$, is conforming, shape-regular, and
  quasi-uniform. We compute the forcing data at $\Gamma_{h_0}$ with
  $f^e(x) = f(p(x))$, where $p(x)=x-d(x)\GRAD d(x)\in \Gamma$ and
  $d(x)$ is the signed distance function for $\Gamma$. The error
  analysis for how the finite element approximation of $u$ in
  \eqref{e:strong} is affected by $\Gamma_{h_0}$ is out of the scope
  of this paper. In what follows, we assume that $h_0$ is small enough
  compared with $h$ so that the error from the interface approximation
  does not affect the total error of our test problems.
\end{remark}

\begin{remark}[Comparing with the usual approach]\label{r:compute}
Based on the previous remark and given a shape function $\phi_h\in \mathbb V_h$, we can approximate the right hand side data $\langle\Fepsilon,\phi_h\rangle_{V',V}$ using quadrature rules on both $\tau_1\in\mathcal T_h(\Omega)$ and $\tau_2\in \mathcal T_{h_0}(\Gamma)$, \ie
\[
\begin{aligned}
	\langle\Fepsilon&,\phi_h\rangle_{V',V} \approx \int_\Omega \int_{\Gamma_{h_0}} \depsilon (x-y) {f^e(y) \phi_h(x)}\diff \sigma_y\diff x \\
	&\approx \sum_{\tau_1\in\mathcal T_h(\Omega)\cap \emph{supp}(\phi_h)}\sum_{\tau_2\in \mathcal T_{h_0}(\Gamma), \tau_2\cap \tau_1^\varepsilon\neq\O}\sum_{j_1=1}^{J_{\tau_1}}\sum_{j_2=1}^{J_{\tau_2}}
	w_{j_1}w_{j_2}\depsilon(q_{j_1}-q_{j_2}) f^e(q_{j_2}) \phi_h(q_{j_1}) .
\end{aligned}
\]
Here $\tau_1^\varepsilon$ follows from the definition \eqref{e:extension} and $\emph{supp}(\phi_h)$
denotes the support of $\phi_h$. $\{w_{j_1},q_{j_1}\}_{j_1=1}^{J_{\tau_1}}$ and
$\{w_{j_2},q_{j_2}\}_{j_2=1}^{J_{\tau_2}}$ are pairs of quadrature
weights and quadrature points defined on $\tau_1\in\mathcal
T_h(\Omega)$ and $\tau_2\in \mathcal T_{h_0}(\Gamma)$,
respectively. On the other hand, letting $Q$ be the collection of quadrature points for all $\tau_2\in \mathcal T_{h_0}(\Gamma)$, the finite element method allows one
to approximate directly $\langle F,v_h\rangle_{V',V}$ by
\[
\begin{aligned}
	\langle F,\phi_h\rangle_{V',V} &\approx  \int_{\Gamma_{h_0}} f^e(y) \phi_h(y)\diff \sigma_y 
	\approx \sum_{\tau_2\in \mathcal T_{h_0}(\Gamma)} \sum_{j_2=1}^{J_{\tau_2}} w_{j_2}  f^e(q_{j_2}) \phi_h(q_{j_2}) \\
	&= \sum_{\tau_1\in \mathcal T_h(\Omega)\cap \emph{supp}(\phi_h)} \sum_{q_{j_2}\in Q, q_{j_2}\in\tau_1}  w_{j_2}  f^e(q_{j_2}) \phi_h(q_{j_2}) .
\end{aligned}
\]
We note that we usually compute $\phi_h(q_{j_1})$ by using a
transformation $B_{\tau_1}$ mapping from the reference simplex
$\hat\tau$ to $\tau_1\in \mathcal T_h(\Omega)$ and set
$q_{j_1} = B_{\tau_1}\hat q_{j_1}$ and
$w_{j_1} = \hat w_{j_1}|\det B_{\tau_1}|$, where
$\{\hat w_{j_1},\hat q_{j_1}\}$ is the pair of quadrature weights and
quadrature points for $\hat\tau$. In the implementation, we store the
reference shape function $\hat\phi_h$, the shape values
$\hat\phi_h(\hat q_{j_1})=\phi_h(q_{j_1})$ and $w_{j_1}$ offline (and similar
implementations for $\tau_2$) but we have to compute
$ \phi_h(q_{j_2})$ online by $\hat\phi_h(B_{\tau_1}^{-1} q_{j_2})$.

We finally note that searching for  $q_{j_2}\in Q$ and $\tau_2\in \mathcal T_{h_0}(\Gamma)$ intersecting with $\tau_1$ can be accelerated by creating R-trees for $q_{j_2}$ and bounding boxes of $\tau_2$, respectively. Then we use the searching algorithms for intersection with the bounding box of $\tau_1\in \mathcal T_h(\Omega)$. In our implementation, we use the searching algorithms from the \emph{Boost.Geometry.Index} library.
\end{remark}

\subsection{Tests on the unit square domain with different $\depsilon(x)$}
We test the interface problem \eqref{e:strong} on the unit square
domain but with a non-homogeneous Dirichlet boundary condition,
following the examples provided in~\cite{HeltaiRotundo-2019-a}. We set
$\Omega=(0,1)^2$ and solve
\begin{equation}\label{e:test}
	\begin{aligned}
		-\Delta u = 0, \quad &\text{ in } \Omega\backslash\Gamma, \\
		\llbracket u \rrbracket = 0, \quad &\text{ on } \Gamma,\\
		\bigg\llbracket \frac{\partial u}{\partial \nu} \bigg\rrbracket = f, \quad &\text{ on } \Gamma,\\
		u = g, \quad &\text{ on } \partial\Omega ,
	\end{aligned}
\end{equation}
where $\Gamma=\partial B_{0.2}(\mathbf{c})$ with $\mathbf{c}=(0.3,0.3)^{\Tr}$, $f=\tfrac{1}{0.2}$ and $g=\ln(|x-\mathbf{c}|)$. The analytic solution is
\[
	u(x) = \left\{
	\begin{aligned}
	-\ln(|x-\mathbf{c}|),&\quad \text{if } |x-\mathbf{c}|>0.2,\\
	-\ln(0.2),&\quad \text{if } |x-\mathbf{c}|\le 0.2 .
	\end{aligned}
	\right. 
\]
In view of Assumption~\ref{a:elliptic-regularity} and Remark~\ref{r:error-estimate}, $r=1$, $s=\tfrac12$ and $u\in H^{3/2^-}(\Omega)$. Hence setting $\varepsilon = h$ yields
\[
	\|u-\ueh\|_{H^1(\Omega)}\preceq h^{1/2}\sim \#\text{DoFs}^{-0.25}\quad
	\|u-\ueh\|_{L^2(\Omega)}\preceq h^{3/2}\sim {\#\text{DoFs}^{-0.75}},
\]
where $\#\text{DoFs}$ stands for the number of degree of freedoms and we used the fact that $h\sim \#\text{DoFs}^{-1/d}$ for quasi-uniform meshes. We shall compute $\ueh$ on a sequence of unstructured,  quasi-uniform,  quadrilateral meshes: we start to compute $u^\varepsilon_{h_1}$ on a coarse mesh of $\Omega$ in Figure~\ref{f:sq-mesh-sol}. Then we compute the next approximated solution in a higher resolution based on the global refinement of the previous mesh. In the meantime, we also take the global refinement of approximated interface according to Remark~\ref{r:surface}. The right plot in Figure~\ref{f:sq-mesh-sol} shows the approximated solution on the mesh produced from the coarse mesh with six-time global refinement (744705 degree of freedoms). 

In Figure~\ref{f:sq-error} we report $L^2(\Omega)$ and $H^1(\Omega)$ errors against $\#\text{DoFs}$ using the following types of $\depsilon(x)$:
\begin{itemize}
\item \emph{Radially symmetric $C^1$}: use \eqref{e:delta-app-1} with $\psi_\rho(x) = (1+\cos(|\pi x|))\chi_{B_1(0)}(x)/2$, where $\chi_{B_1(0)}(x)$ is the characteristic function on $B_1(0)$;
\item \emph{Tensor product $C^1$}: use \eqref{e:delta-app-2} with $\psi_{1d}(x) = (1+\cos(|\pi x|))\chi_{(-1,1)}(x)/2$;
\item \emph{Tensor product $C^\infty$}: use \eqref{e:delta-app-2} with $\psi_{1d} (x) = e^{1-1/(1-|x|^2)}\chi_{(-1,1)}(x)$;
\item \emph{Tensor product $L^\infty$}: use \eqref{e:delta-app-2} with $\psi_{1d} (x) = \tfrac12\chi_{(-1,1)}(x)$.
\end{itemize}
\begin{figure}[hbt!]
\begin{center}
\begin{tabular}{cc}
\!\!\!\!\!\!\!\!\!\!\!\!\!\!\!\includegraphics[scale=0.23]{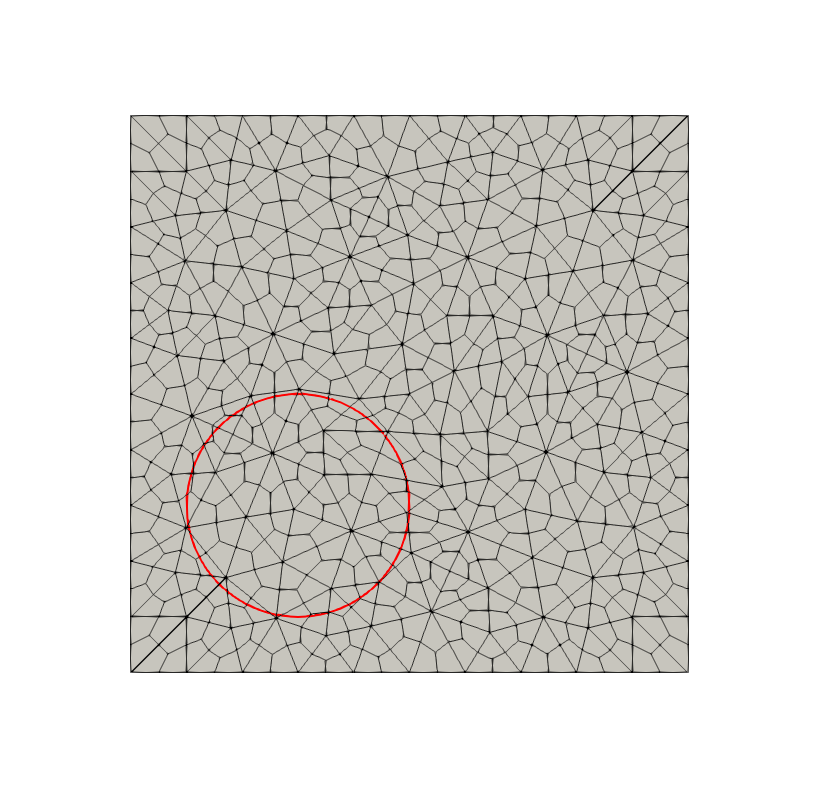} 
&\!\!\!\!\!\!\!\!\!\!\!\!\!\!\! \includegraphics[scale=0.37]{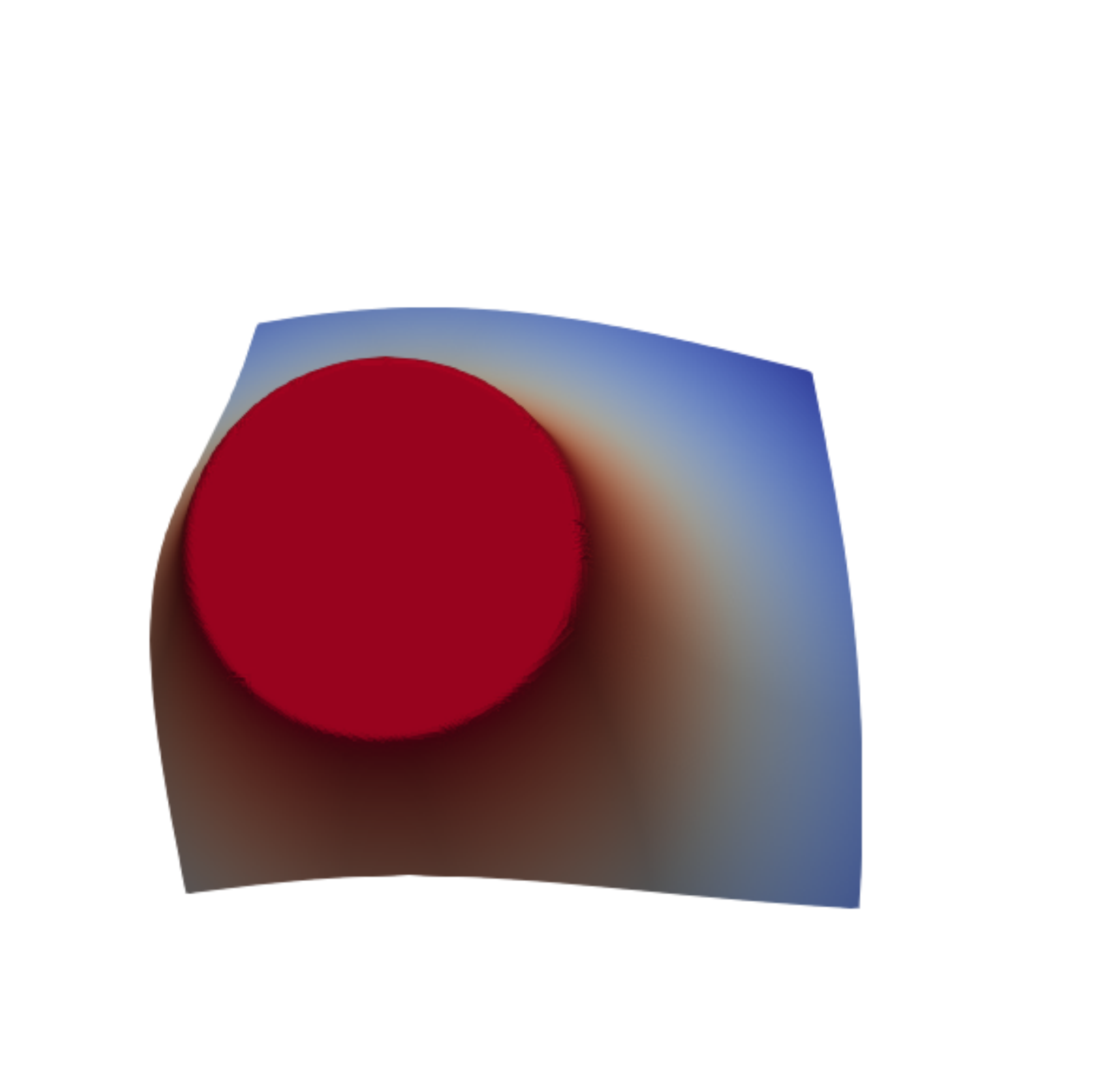} \\
\end{tabular}
\end{center}
\caption{(Left) the coarse meshes of the unit square domain $\Omega$ in black and the interface $\Gamma$ in red and (right) the approximated solution on the six-time-global-refinement mesh using the regularization type \emph{Tensor product $C^1$} and $\varepsilon = h$.}
\label{f:sq-mesh-sol}
\end{figure}
The predicted rates are observed in all four types. Errors in the first three types behave similar while errors for the last type are larger than those in the other cases.
\begin{figure}[hbt!]
\begin{center}
\includegraphics[scale=0.5]{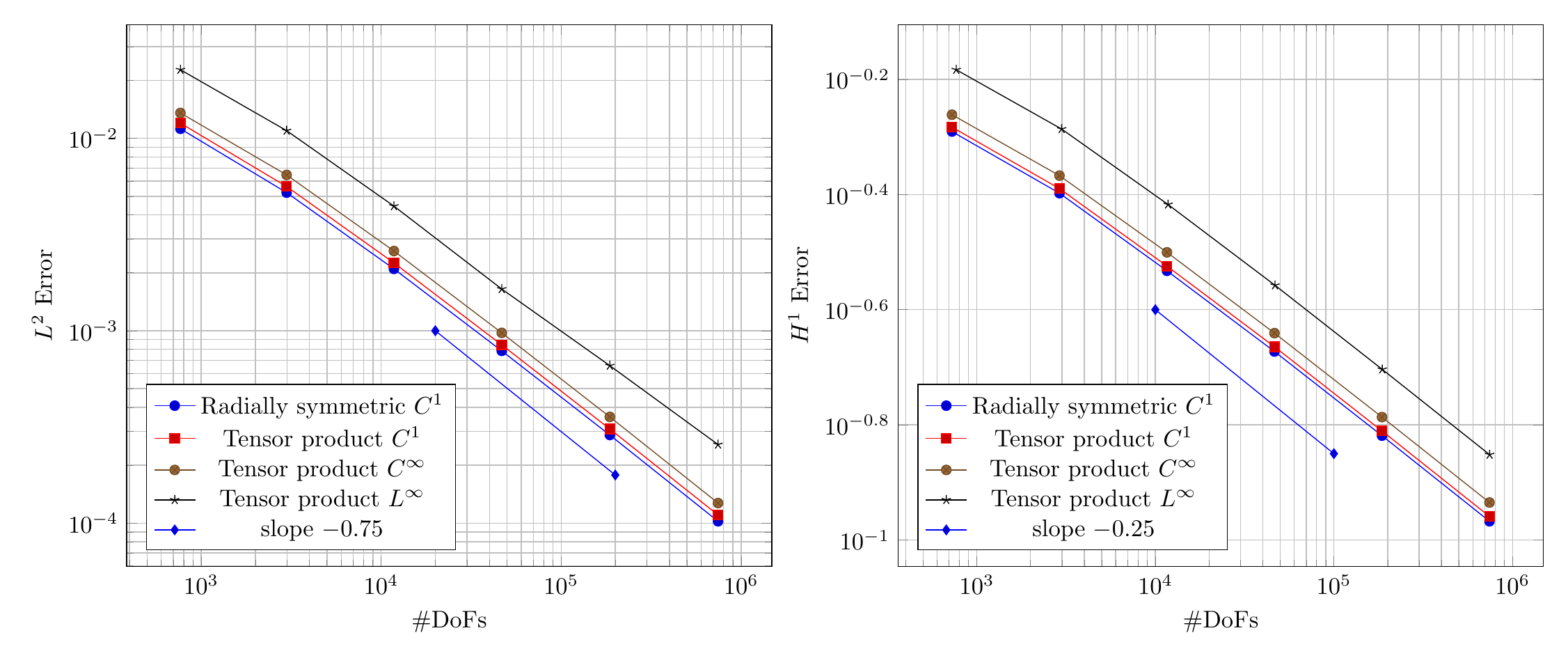} 
\end{center}
\caption{$L^2(\Omega)$ and $H^1(\Omega)$ errors against the number of degree of freedoms ($\#$DoFs) using different types of $\depsilon$.}
\label{f:sq-error}
\end{figure}

\subsection{Tests on a L-shaped domain}
Let $\Omega$ be the L-shaped domain {$(-1,1)^2\backslash ([0,1]\times[-1,0])$} and $\Gamma=\partial B_{0.2}(\mathbf{c})$ with $\mathbf{c}=(-0.5,-0.5)^{\Tr}$. We use the polar coordinates $(r(x),\theta(x))$ to define $u$ by
\[
	u(x)=r(x)^{\tfrac13}\sin(\tfrac{\theta(x)}3)+0.3
	\left\{
	\begin{aligned}
	-\ln(|x-\mathbf{c}|),&\quad \text{if } |x-\mathbf{c}|>0.2,\\
	-\ln(0.2),&\quad \text{if } |x-\mathbf{c}|\le 0.2 
	\end{aligned}
	\right.
\]
so that $u\in H^{4/3^-}(\Omega)$ is the solution to the test problem \eqref{e:test} with $f=1.5$ on $\Gamma$ and $g=u$ on $\partial\Omega$. Assumption~\ref{a:elliptic-regularity} and Remark~\ref{r:error-estimate} imply that $r=\tfrac23$, $s=\tfrac12$ and $\beta=\tfrac13$. So letting $ \varepsilon=h^q, q\in(0,1]$, we should expect that
\[
	\|u-\ueh\|_{H^1(\Omega)}\preceq \#\text{DoFs}^{-\min\{1/6,q/4\}}
\]
and
\[
	\|u-\ueh\|_{L^2(\Omega)}\preceq  \#\text{DoFs}^{-\min\{1/2,1/3+q/4,3q/4\}}.
\]
Figure~\ref{f:lshaped-mesh-sol} provides the coarse mesh of the numerical test and also the solution on the mesh with six-time global refinement (1574913 degrees of freedoms). In Figure~\ref{f:lshaped-error} we also report $L^2(\Omega)$ and $H^1(\Omega)$ errors against $ \#\text{DoFs}$ with $q=0.2,0.4,0.6,0.8,1$ using the $\depsilon(x)$ with the type \emph{Tensor product $C^1$}. We again observed the predicted convergence rates from Figure~\ref{f:lshaped-error}.
\begin{figure}[hbt!]
\begin{center}
\begin{tabular}{cc}
\!\!\!\!\!\!\!\!\!\!\!\!\!\!\!\includegraphics[scale=0.24]{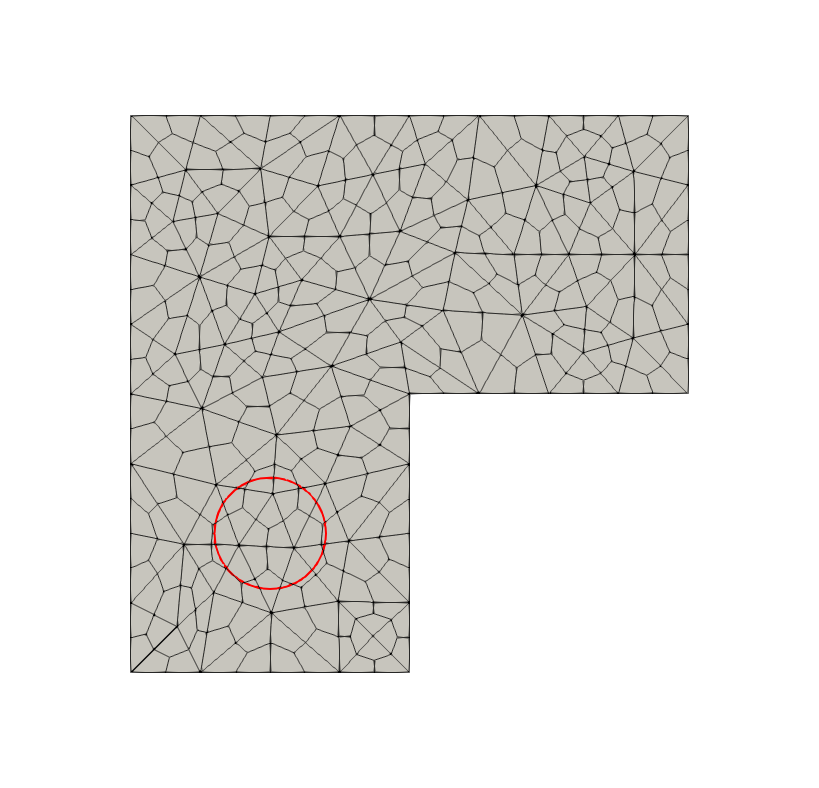} 
& \!\!\!\!\!\!\!\!\!\!\!\!\!\!\!\includegraphics[scale=0.11]{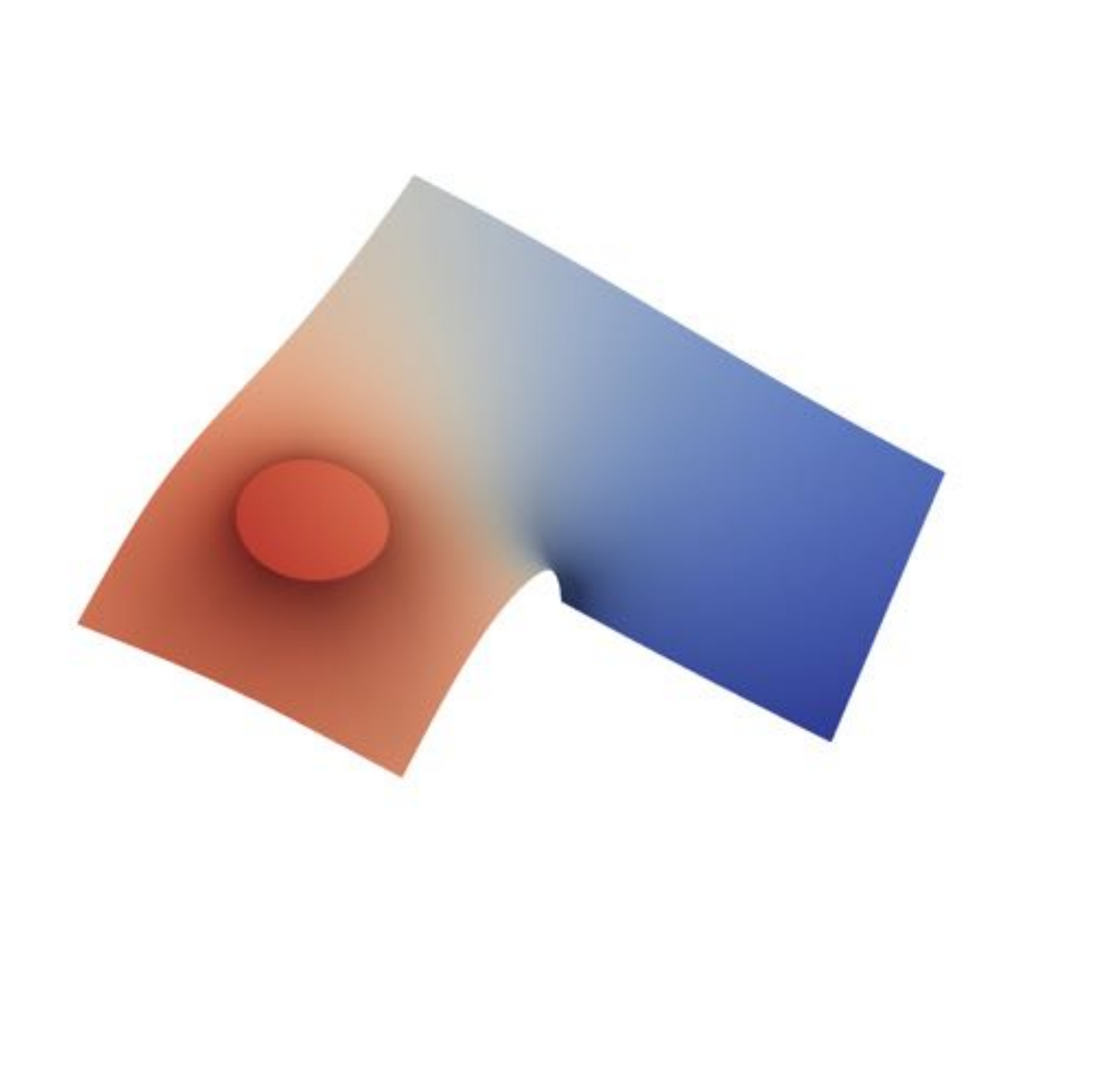} \\
\end{tabular}
\end{center}
\caption{(Left) the coarse meshes of the L-shaped domain $\Omega$ in black and the interface $\Gamma$ in red and (right) the approximated solution on the six-time-global-refinement mesh.}
\label{f:lshaped-mesh-sol}
\end{figure}
\begin{figure}[hbt!]
\begin{center}
\includegraphics[scale=0.5]{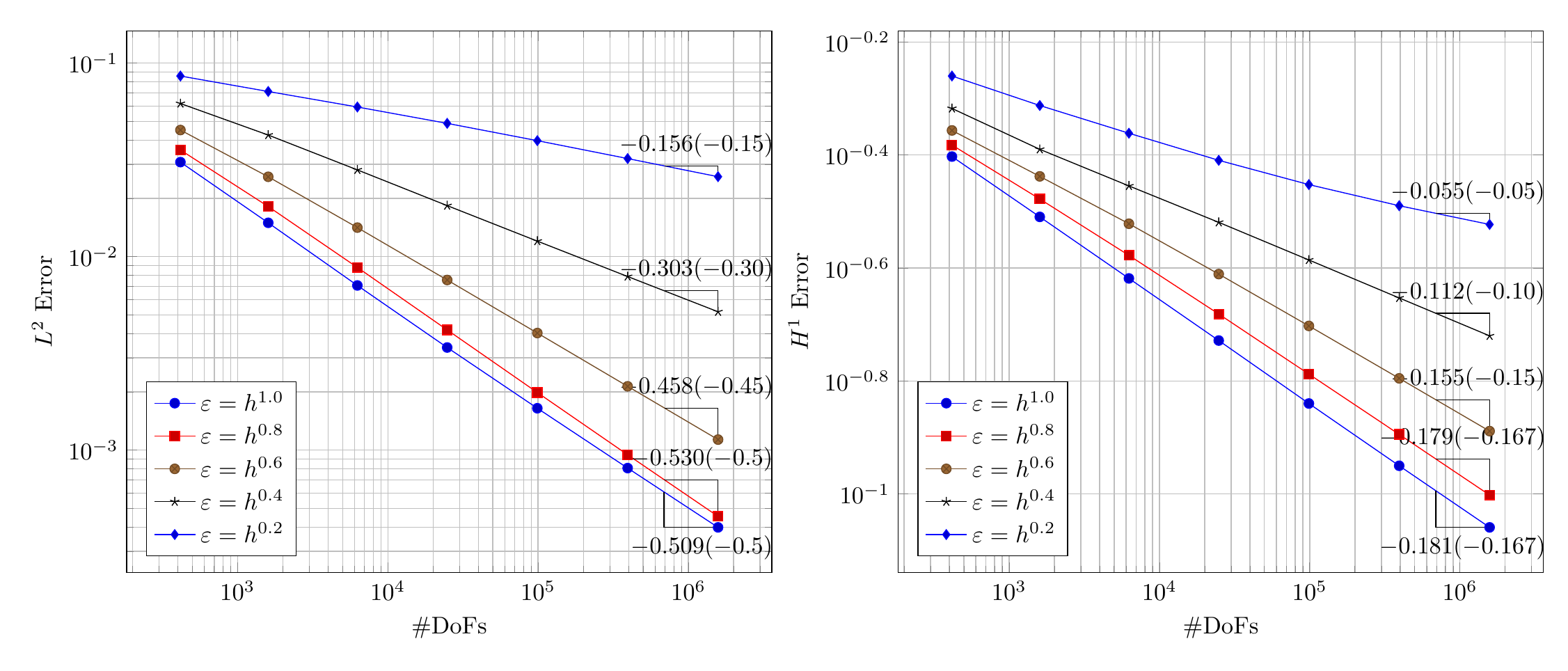}
\end{center}
\caption{$L^2(\Omega)$ and $H^1(\Omega)$ errors against the number of degree of freedoms ($\#$DoFs) using the type \emph{Tensor product $C^1$} for $\depsilon$. For each fixed type of $\depsilon$ we plot the error decay with the setting $\varepsilon = h^q$ for $q=0.2,0.4,0.6,0.8,1$. For each error plot, the last slope of the segment together with the predicted convergence rate are reported.}
\label{f:lshaped-error}
\end{figure}

\subsection{Tests on the unit cube}
We finally test the interface problem \eqref{e:test} in the three dimensional space by setting $\Omega=(0,1)^3$, $\Gamma=B_{0.2}(\mathbf{c})$ with $\mathbf{c}=(0.3,0.3,0.3)^{\Tr}$, $f=1/0.2^2$ and $g=1/|x-\mathbf{c}|$. Then the analytical solution is given by
\[
	u(x) = \left\{
	\begin{aligned}
	\frac1{|x-\mathbf{c}|},&\quad \text{if } |x-\mathbf{c}|>0.2,\\
	\frac1{0.2},&\quad \text{if } |x-\mathbf{c}|\le 0.2 .
	\end{aligned}
	\right. 
\]
Figure~\ref{f:cube-mesh-sol-unstructured} shows the unstructured coarse mesh of the unit cube as well as the approximated solution on the mesh after the forth-time global refinement (2324113 degrees of freedoms). In Figure~\ref{f:cube-error-time-unstructured} we plot the $L^2(\Omega)$ error decay by setting $\varepsilon=h$ and the error decay without using Dirac delta approximations as mentioned in Remark~\ref{r:compute}. We also plot the CPU time for the computation of the right hand side vector against $\#\text{DoFs}$ with or without regularization in Figure~\ref{f:cube-error-time-unstructured}. We note that the the computer we use has 2.2 GHz Intel Core i7 with 16GB memory.  Figure~\ref{f:cube-error-time-unstructured} shows that both the computation time for the right hand side assembling and error using the regularization approach are comparable to those using the usual method. We have to point out that according to Remark~\ref{r:compute}, we cannot evaluate $B_{\tau_1}^{-1}$ explicitly when the mesh is unstructured and here we use the Newton iteration instead. So if the geometry of each element is simple such as cube, the computation time without regularization can be reduced significantly.
\begin{figure}[hbt!]
\begin{center}
\begin{tabular}{cc}
\includegraphics[scale=0.09]{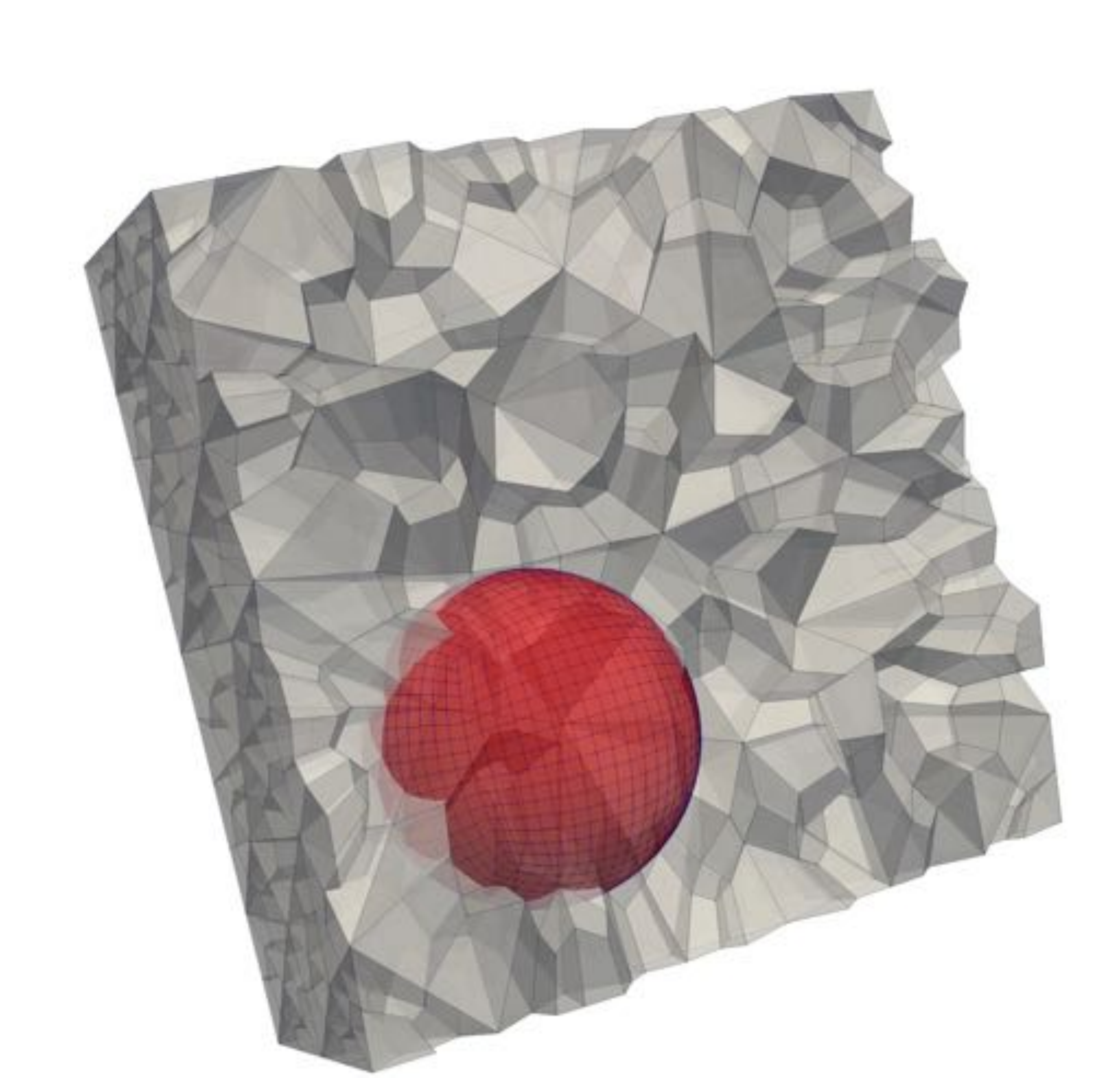} 
& \includegraphics[scale=0.1]{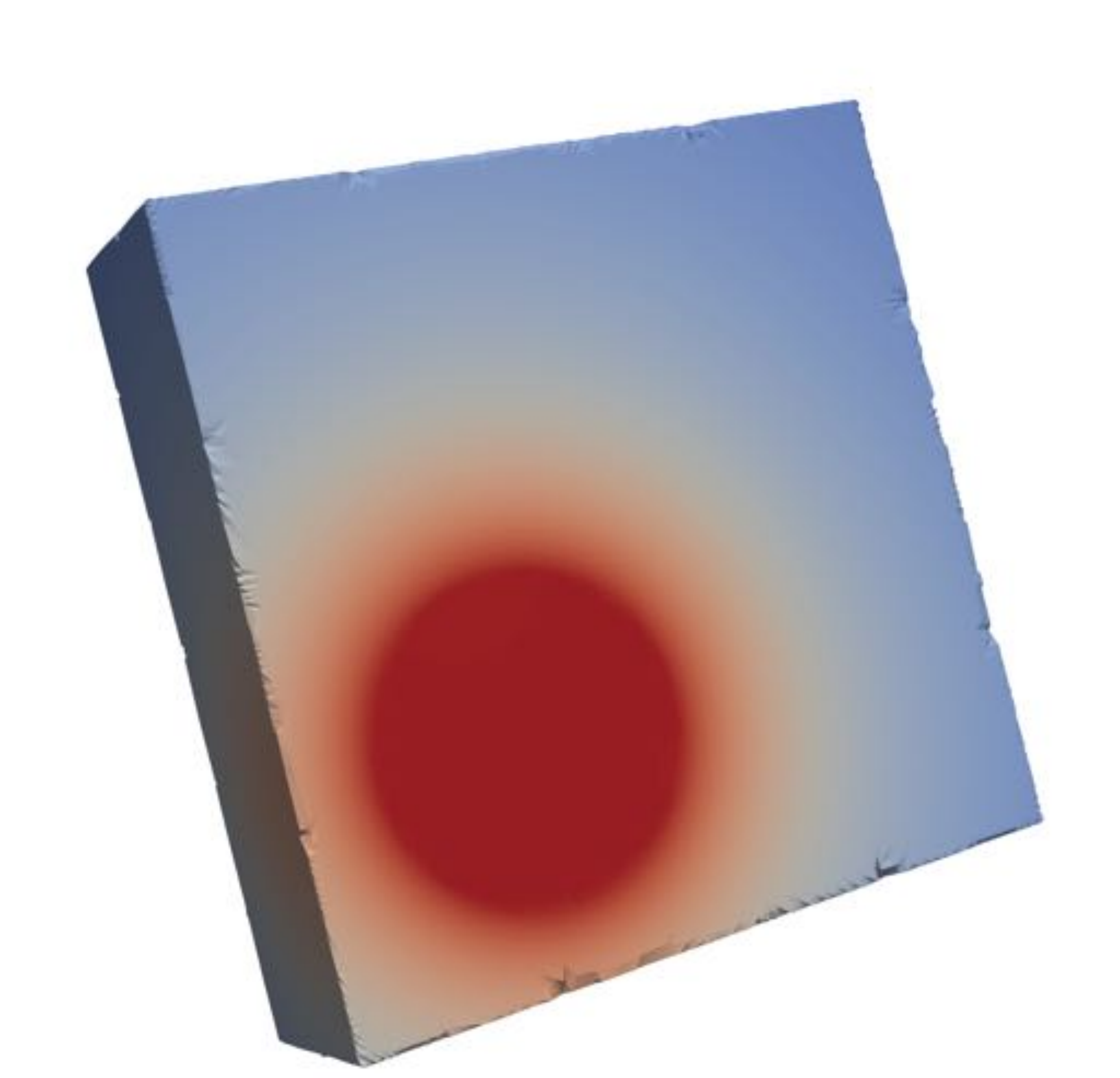} \\
\end{tabular}
\end{center}
\caption{(Left) clip of the coarse mesh ($0\le x_1\le 0.3$) of the unit cube $\Omega$ together with the subdivision of the interface $\Gamma$ in red and (right) the approximated solution computed on the three-time global refinement mesh using the regularization with the type \emph{Tensor product $C^1$}.}
\label{f:cube-mesh-sol-unstructured}
\end{figure}
\begin{figure}[hbt!]
\begin{center}
\includegraphics[scale=0.5]{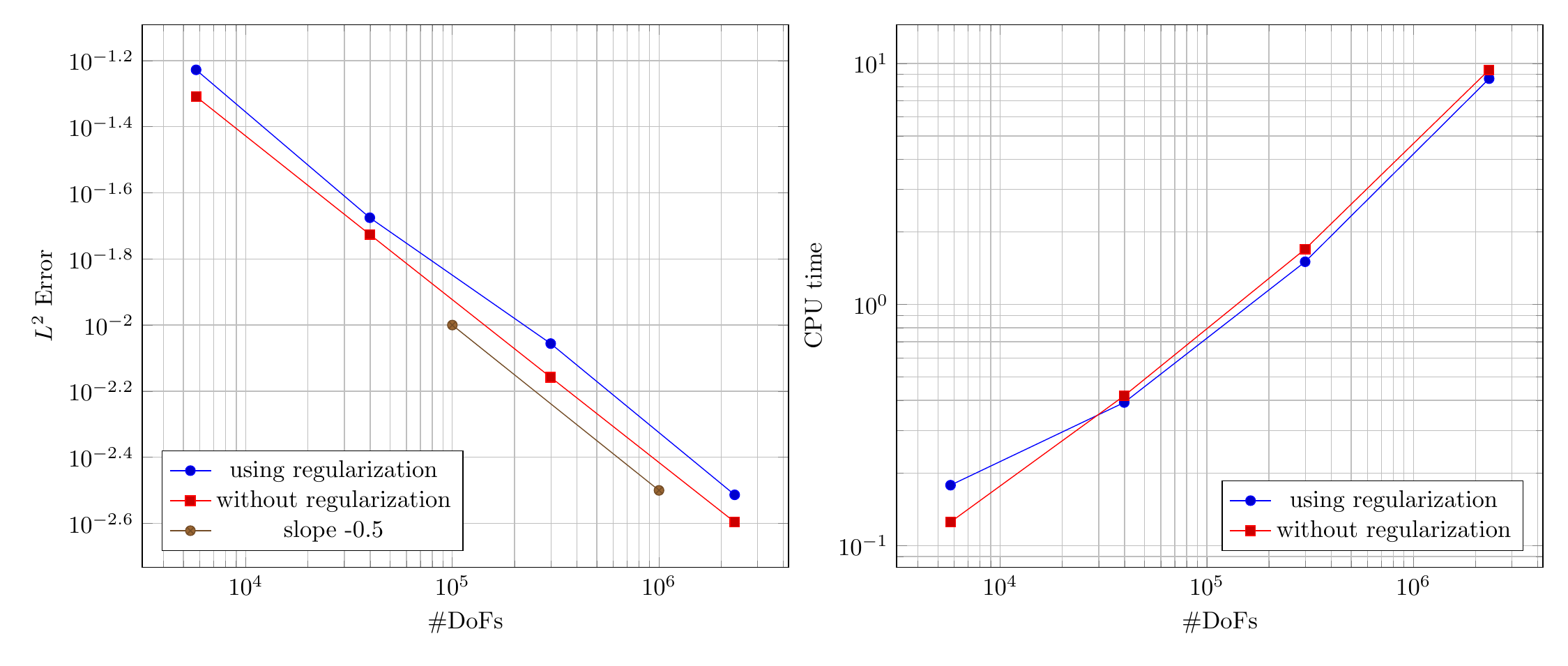} 
\end{center}
\caption{Plots of $L^2(\Omega)$ error decay (left) and CPU time for right hand side assembling against the number of degree of freedoms using with or without regularization according to Remark~\ref{r:compute}. We regularize the forcing data using the type \emph{Tensor product C1} with $\varepsilon=h$.}
\label{f:cube-error-time-unstructured}
\end{figure}
\bibliographystyle{plain}

\end{document}